\theoremstyle{plain}
\newtheorem{theorem}{Theorem}[section]
\newtheorem{corollary}[theorem]{Corollary}
\newtheorem{lemma}[theorem]{Lemma}
\newtheorem{proposition}[theorem]{Proposition}
\theoremstyle{definition}
\newtheorem{example}[theorem]{Example}
\numberwithin{equation}{section}
\newcommand\E{\mathds E}
\newcommand\N{\mathds N}
\newcommand\R{\mathds R}
\newcommand\I{\mathds 1}
\renewcommand\P{\mathds P}
\newcommand\Bscr{\mathscr{B}}
\newtheorem*{ack}{Acknowledgement}
\renewcommand\d{\mathrm{d}}
\newcommand\e{\mathrm{e}}
\begin{document}\allowdisplaybreaks
\title[Log-Harnack Inequalities for Gruschin Type
Semigroups]{\bfseries Log-Harnack
Inequalities for Markov Semigroups Generated by Non-Local
Gruschin Type Operators}

\author[C.-S.~Deng]{Chang-Song Deng}
\address[C.-S.~Deng]{School of Mathematics and Statistics\\ Wuhan University\\ Wuhan 430072, China}
\email{dengcs@whu.edu.cn}
\thanks{Financial support through the
National Natural Science Foundation
of China (11401442) (for Chang-Song Deng)
and the National Natural Science Foundation
of China (11626237) (for Shao-Qin Zhang)
is gratefully acknowledged.}

\author[S.-Q.~Zhang]{Shao-Qin Zhang}
\address[S.-Q.~Zhang]{School of Statistics and Mathematics, Central University of Finance and Economics, Beijing 100081, China}
\email{zhangsq@cufe.edu.cn}

\subjclass[2010]{60J75, 60H10}
\keywords{log-Harnack inequality, non-local operator,
Gruschin semigroup, subordinator, coupling}

\maketitle

\begin{abstract}
    Based on coupling in two steps and the regularization
    approximations of the underlying subordinators,
    we establish log-Harnack inequalities for Markov
    semigroups generated by a class of non-local
    Gruschin type operators. Some concrete examples
    are also presented.
\end{abstract}

\section{Introduction}

    The classical Gruschin semigroup on $\R^2$ with order $l>0$
    is generated by the differential operator
    $$
        L(x^{(1)},x^{(2)})=\frac12\left[
        \frac{\partial^2}{\partial(x^{(1)})^2}
        +|x^{(1)}|^{2l}
        \frac{\partial^2}{\partial(x^{(2)})^2}
        \right].
    $$
    The derivative formula of Bismut-Elworthy-Li's
    type (cf.\ \cite{Bis84, EL94})
    and log-Harnack
    inequality, first introduced
    in \cite{RW10}, have been investigated for
    the associated diffusion processes
    in \cite{Wan14} and \cite{WX14}, respectively.
    As a natural extension, let us consider the following
    non-local Gruschin type operator
    \begin{equation}\label{operator}
        L_{\phi_1,\phi_2}(x^{(1)},x^{(2)})
        =-\frac12\left[
        \phi_1\left(-
        \frac{\partial^2}{\partial(x^{(1)})^2}
        \right)
        +|x^{(1)}|^{2l}\phi_2
        \left(-\frac{\partial^2}{\partial(x^{(2)})^2}
        \right)
        \right].
    \end{equation}
    Here, each $\phi_i:(0,\infty)\rightarrow(0,\infty)$
    is a Bernstein function with
    $\lim_{u\downarrow0}\phi_i(u)=0$, i.e.\ $\phi_i$ is
    given by (see e.g.\ \cite[Theorem 3.2]{SSV12})
    \begin{equation}\label{bern}
        \phi_i(u)=\vartheta_i u+
        \int_{(0,\infty)}\left(1-\e^{-ux}\right)
        \,\nu_i(\d x),\quad u>0,
    \end{equation}
    where $\vartheta_i\geq0$ is the drift parameter, and
    $\nu_i$ is a L\'{e}vy measure, that is, a Radon measure on
    $(0,\infty)$ such that
    $\int_{(0,\infty)}(1\wedge x)\,\nu_i(\d x)
    <\infty$. The Markov process with jumps
    generated by the non-local
    operator \eqref{operator} can
    be constructed by solving
    the degenerate SDE driven by subordinate Brownian motions
    $$
        \left\{
        \begin{array}{l}
            \d X_t^{(1)}=\d W_{S_1(t)}^{(1)},\\
            \d X_t^{(2)}=|X_t^{(1)}|^l\,\d W_{S_2(t)}^{(2)},
        \end{array}
        \right.
    $$
    where $W_{t}^{(1)}$, $W_{t}^{(2)}$, $S_1(t)$, and
    $S_2(t)$ are independent processes such that
    each $W_{t}^{(i)}$ is a standard $1$-dimensional Brownian
    motion, and $S_i(t)$ is
    a subordinator (i.e.\ a non-decreasing L\'{e}vy process
    on $[0,\infty)$) determined by its Laplace
    transform which is of the form
    $$
        \E\,\e^{-uS_i(t)}=\e^{-t\phi_i(u)},
        \quad t\geq0,\,u>0.
    $$

    Due to the importance both in theory and in applications,
    recently, there has been considerable interest in the
    study of discontinuous Markov processes.
    The central aim of this paper is to
    establish the log-Harnack
    inequality for Markov semigroups
    generated by the non-local Gruschin type
    operator \eqref{operator}. The log-Harnack
    inequality can be regarded as a weaker version
    of F.-Y.\ Wang's dimension-free Harnack
    inequality with power initialed in \cite{Wan97},
    and has been thoroughly investigated, especially
    for diffusion processes; the basic
    argument was a coupling by change of
    measure, see \cite{Wbook} and reference
    therein for recent developments
    on Harnack type inequalities
    for various models.
    Since it is usually very
    difficult to construct successful couplings
    for non-linear SDEs driven by pure jump
    noises, the methods from diffusions cannot
    be directly applied and we need some technique from
    the study for jump-diffusion processes.
    In this article, our tool is based
    on the coupling approach in \cite{WX14}
    and the regularization
    approximations of time-changes used
    in \cite{Zha13, WW14, WZ15, Den14, DS16}.

    The log-Harnack inequality has become an efficient tool in stochastic analysis, and it can be used to study
    the strong Feller property, heat
    kernel estimates, transportation-cost
    inequalities, and many more; we refer to the
    monograph by F.-Y.\ Wang
    \cite[Subsection 1.4.1]{Wbook} for an
    in-depth explanation of its applications.

    For generality, we consider the following SDE
    for $X_t=(X_t^{(1)},X_t^{(2)})$ on
    $\R^{m+d}=\R^m\times\R^d$ ($m,d\in\N$):
    \begin{equation}\label{maineq}
        \left\{
        \begin{array}{l}
            \d X_t^{(1)}=\sigma_t\,\d W_{S_1(t)}^{(1)},\\
            \d X_t^{(2)}=b(t,X_t^{(2)})\,\d t
            +|X_t^{(1)}|^l\,\d W_{S_2(t)}^{(2)},
        \end{array}
        \right.
    \end{equation}
    where $\sigma:[0,\infty)\rightarrow\R^m\otimes\R^m$ is
    measurable and locally bounded, $b:[0,\infty)\times\R^d\rightarrow\R^d$ is
    measurable, locally bounded in the time
    variable $t\geq0$ and
    continuous in the
    space variable $x^{(2)}\in\R^d$,
    and $W_t:=(W_{t}^{(1)},W_{t}^{(2)})$, $S_1(t)$, and
    $S_2(t)$ are independent processes on
    a probability space $(\Omega,\mathscr{A}, \P)$
    such that
    \begin{enumerate}
        \item[(i)] $W_t$ is a
        standard Brownian motion
        on $\R^{m+d}$;

        \item[(ii)] Each $S_i(t)$ is a subordinator
            with characteristic
            exponent (Bernstein function) $\phi_i$
            given by \eqref{bern}.
    \end{enumerate}

    We will assume the following conditions on $\sigma$ and $b$:
    \begin{enumerate}
        \item[\textbf{(H1)}]
            For every $t\geq0$, $\sigma_t$ is
            invertible and there exists a non-decreasing
            function $\lambda: [0,\infty)\rightarrow(0,\infty)$
            such that $\|\sigma_t^{-1}\|\leq\lambda_t$
            for all $t\geq0$.

        \item[\textbf{(H2)}]
            There exists a locally bounded measurable function
            $k:[0,\infty)\rightarrow\R$ such that
                $$
                    \big\langle b(t,x^{(2)})
                    -b(t,y^{(2)}),x^{(2)}-y^{(2)}
                    \big\rangle\leq k(t)|x^{(2)}-y^{(2)}|^2,\quad x^{(2)},y^{(2)}\in\R^d,\,t\geq0.
                $$
    \end{enumerate}
    It is easy to see that once $X_t^{(1)}$ is fixed,
    then \textbf{(H2)} implies the existence, uniqueness and
    non-explosion of the solution to the second equation
    in \eqref{maineq}. For $x=(x^{(1)},x^{(2)})\in\R^m\times\R^d$,
    denote by $X_t(x)=(X_t^{(1)}(x),X_t^{(2)}(x))$ the solution
    to \eqref{maineq} with $X_0=x$. We aim to establish log-Harnack
    inequalities for the associated Markov semigroup
    $P_t$ on $\Bscr_b(\R^{m+d})$:
    $$
        P_tf(x):=\E f\big(X_t(x)\big),\quad
        t\geq0,\,f\in\Bscr_b(\R^{m+d}), \,x\in\R^{m+d}.
    $$

    In order to state our main result,
    we need the following notation:
    $$
        K(s,t):=\int_s^tk(r)\,\d r,\quad 0\leq s\leq t,
    $$
    where $k$ is the function appearing in \textbf{(H2)}.

\begin{theorem}\label{main1}
        Let $l\in(0,m/2)$ and
        assume that \textbf{\upshape(H1)} and
        \textbf{\upshape(H2)} hold. There is
        some constant $C=C(m,d,l)>0$ such
        that for any $T>0$, $x=(x^{(1)},x^{(2)}),
        y=(y^{(1)},y^{(2)})\in\R^{m+d}$, and
        $f\in\Bscr_b(\R^d)$ with $f\geq1$,
        \begin{align*}
            &P_{2T}\log f(y)
            \leq\log P_{2T}f(x)
            +\frac{|x^{(1)}-y^{(1)}|^2}{2}
            \,\E\left(
            \int_0^T\lambda_r^{-2}\,
            \d S_1(r)
            \right)^{-1}\\
            &\quad+C\e^{2K(0,T)}\Bigg\{
            |x^{(2)}-y^{(2)}|^2
            \E\left(
            \int_0^T\lambda_r^{-2}\,
            \d S_1(r)
            \right)^{-l}
            \cdot
            \E\left(
            \int_T^{2T}\e^{-2K(T,s)}
            \,\d S_2(s)
            \right)^{-1}\\
            &\qquad+\left(\left[
            |x^{(1)}|^{2(l-1)^+}+
            |y^{(1)}|^{2(l-1)^+}
            \right]
            \E \left(
            \int_0^T\lambda_r^{-2}\,
            \d S_1(r)
            \right)
            ^{-l}
            +
            \E \left(
            \int_0^T\lambda_r^{-2}\,
            \d S_1(r)
            \right)
            ^{-(l\wedge1)}
            \right)\\
            &\qquad\quad
            \times
            |x^{(1)}-y^{(1)}|^{2(l\wedge1)}
            \E\frac{\int_0^{T}\e^{-2K(0,s)}
            \,\d S_2(s)}
            {\int_T^{2T}\e^{-2K(T,s)}
            \,\d S_2(s)}
            \Bigg\}.
        \end{align*}
    \end{theorem}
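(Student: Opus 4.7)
The plan is to adapt the two-step coupling argument for the continuous Gruschin semigroup in \cite{WX14} to the present non-local setting by combining it with the regularization--approximation scheme of \cite{Zha13, WW14, WZ15, Den14, DS16}. Since the noises $W^{(i)}_{S_i(\cdot)}$ are pure-jump processes, Girsanov transforms are not directly available; I would therefore first replace each $S_i$ by an absolutely continuous approximation $S_i^\varepsilon$ with strictly positive derivative $\dot S_i^\varepsilon$. Conditionally on $(S_1^\varepsilon, S_2^\varepsilon)$, each process $W^{(i)}_{S_i^\varepsilon(\cdot)}$ can be represented, via a conditionally deterministic time change, as $\int_0^{\cdot} \sqrt{\dot S_i^\varepsilon(r)}\,\d \tilde W_r^{(i)}$ for some Brownian motion $\tilde W^{(i)}$, so that the regularized SDE becomes a diffusion SDE with random but adapted coefficients. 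The classical coupling-by-change-of-measure machinery then applies; the final inequality is recovered by letting $\varepsilon \downarrow 0$.

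On the same probability space, I would construct a companion $Y^\varepsilon = (Y^{(1),\varepsilon}, Y^{(2),\varepsilon})$ started at $y$ in two stages. On $[0, T]$, add a coupling drift to the first equation chosen so that $Y_t^{(1),\varepsilon} - X_t^{(1),\varepsilon}$ decays linearly in the weighted clock $r \mapsto \int_0^r \lambda_s^{-2} \dot S_1^\varepsilon(s)\, \d s$ and vanishes at $t = T$; by a Cauchy--Schwarz argument this linear profile minimizes the $L^2$-cost of the drift, and accounts for the leading term $\tfrac{1}{2}|x^{(1)} - y^{(1)}|^2\, \E[\int_0^T \lambda_r^{-2}\, \d S_1(r)]^{-1}$. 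During this first stage the second components are driven by the common noise $\tilde W^{(2)}$ but with differing coefficients $|X_t^{(1),\varepsilon}|^l$ and $|Y_t^{(1),\varepsilon}|^l$. On $[T, 2T]$, the first coordinates agree, so I switch to synchronous coupling for $X^{(1)}, Y^{(1)}$ and add a coupling drift to the second equation forcing $Y_{2T}^{(2),\varepsilon} = X_{2T}^{(2),\varepsilon}$, with an $\e^{-K(T,\cdot)}$-exponential weight dictated by \textbf{(H2)}; this produces the factor $[\int_T^{2T} \e^{-2K(T,s)}\, \d S_2(s)]^{-1}$. Girsanov converts the two coupling drifts into a change-of-measure density $R^\varepsilon$, and the entropy inequality yields $P_{2T}^\varepsilon \log f(y) \leq \log P_{2T}^\varepsilon f(x) + \E[R^\varepsilon \log R^\varepsilon]$.

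The bulk of the work is bounding $\E[R^\varepsilon \log R^\varepsilon]$. The first-stage drift produces the leading term above. In the second stage the drift must correct the gap $Y_T^{(2),\varepsilon} - X_T^{(2),\varepsilon}$, which decomposes into the propagated initial gap (giving the $|x^{(2)} - y^{(2)}|^2$ block of the bound) and the accumulated mismatch $\int_0^T (|Y_s^{(1),\varepsilon}|^l - |X_s^{(1),\varepsilon}|^l)\sqrt{\dot S_2^\varepsilon(s)}\,\d \tilde W^{(2)}_s$. For the latter I would use the elementary inequality $\bigl||a|^l - |b|^l\bigr| \leq C\bigl(|a|^{(l-1)^+} + |b|^{(l-1)^+}\bigr)|a - b|^{l \wedge 1}$, which explains the exponents $(l-1)^+$ and $l \wedge 1$ in the theorem; It\^o isometry combined with the $\e^{-K}$ weighting then generates the ratio $\E\, \int_0^T \e^{-2K(0,s)}\, \d S_2(s)/\int_T^{2T} \e^{-2K(T,s)}\, \d S_2(s)$. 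The main technical obstacle is the negative moment $\E|X_r^{(1),\varepsilon}|^{-2l}$ for $r \geq T$, which is precisely where the constraint $l < m/2$ is needed: conditionally on $S_1^\varepsilon$, $X_r^{(1),\varepsilon}$ is Gaussian with covariance dominating $(\int_0^T \lambda_s^{-2} \dot S_1^\varepsilon(s)\, \d s)\, I_m$ by \textbf{(H1)}, and a standard Gaussian computation yields a bound of order $(\int_0^T \lambda_s^{-2}\, \d S_1^\varepsilon(s))^{-l}$ exactly when $2l < m$. Assembling these contributions and passing to the limit $\varepsilon \downarrow 0$ by dominated convergence yields the claimed inequality.
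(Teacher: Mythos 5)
Your proposal is correct and follows essentially the same route as the paper: regularize the subordinator paths to absolutely continuous, strictly increasing time changes, condition on them so that the classical two-step coupling by change of measure and Girsanov's theorem apply, bound the relative entropy $\E[R\log R]$ using the linear-decay coupling drift, the elementary inequality for $\bigl||a|^l-|b|^l\bigr|$, and the negative Gaussian moment estimate (where $l<m/2$ enters), and finally pass to the limit and average over the subordinators. The only structural difference is cosmetic: the paper isolates the deterministic-time-change case as a standalone proposition and recovers the theorem at the very end via conditioning and Jensen's inequality, and its limiting argument proceeds through intermediate approximations of $\sigma$ and $b$ rather than a single dominated-convergence step.
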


The following result is a direct consequence of
Theorem \ref{main1}.

\begin{corollary}\label{cor}
        Let $\sigma= I_{m\times m}$, $b=0$,
        and $l\in(0,m/2)$. There is
        some constant $C=C(m,d,l)>0$ such
        that for any $T>0$, $x=(x^{(1)},x^{(2)}),
        y=(y^{(1)},y^{(2)})\in\R^{m+d}$, and
        $f\in\Bscr_b(\R^d)$ with $f\geq1$,
        \begin{align*}
            &P_{2T}\log f(y)
            \leq\log P_{2T}f(x)
            +\frac{|x^{(1)}-y^{(1)}|^2}
            {2}\E S_1(T)^{-1}\\
            &\;+C\E S_2(T)
            ^{-1}\cdot\bigg\{
            |x^{(2)}-y^{(2)}|^2
            \E S_1(T)^{-l}
            \\
            &\;\quad+\left(\left[
            |x^{(1)}|^{2(l-1)^+}+
            |y^{(1)}|^{2(l-1)^+}
            \right]
            \E S_1(T)
            ^{-l}
            +
            \E S_1(T)
            ^{-(l\wedge1)}
            \right)
            |x^{(1)}-y^{(1)}|^{2(l\wedge1)}
            \E S_2(T)
            \bigg\}.
        \end{align*}
    \end{corollary}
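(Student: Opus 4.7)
The plan is to deduce the corollary by specializing each ingredient of Theorem \ref{main1} to the present setting and simplifying using the probabilistic properties of the subordinator $S_2$. There is no serious analytic obstacle; the only nontrivial step is handling the ratio expectation via independence.

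First I would record the simplifications of the deterministic quantities. Since $\sigma = I_{m\times m}$, assumption \textbf{(H1)} holds with $\lambda_t \equiv 1$, so
\[
    \int_0^T \lambda_r^{-2}\,\d S_1(r) = S_1(T).
\]
Since $b = 0$, assumption \textbf{(H2)} holds with $k \equiv 0$, hence $K(s,t) \equiv 0$ and the factor $\e^{2K(0,T)}$ becomes $1$. Consequently the weighted integrals against $\d S_2$ reduce to
\[
    \int_0^T \e^{-2K(0,s)}\,\d S_2(s) = S_2(T),
    \qquad
    \int_T^{2T}\e^{-2K(T,s)}\,\d S_2(s) = S_2(2T)-S_2(T).
\]

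Next I would exploit the L\'evy process structure of $S_2$. By stationarity of increments, $S_2(2T)-S_2(T)$ has the same distribution as $S_2(T)$, so
\[
    \E\bigl(S_2(2T)-S_2(T)\bigr)^{-1} = \E\,S_2(T)^{-1}.
\]
By independence of increments, $S_2(T)$ and $S_2(2T)-S_2(T)$ are independent, which gives
\[
    \E\,\frac{\int_0^T \e^{-2K(0,s)}\,\d S_2(s)}{\int_T^{2T}\e^{-2K(T,s)}\,\d S_2(s)}
    = \E\,S_2(T)\cdot\E\,S_2(T)^{-1}.
\]

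Finally, substituting all of these identifications into the bound of Theorem \ref{main1} collapses every term into one of the four shapes appearing in the statement of the corollary, and the constant $C = C(m,d,l)$ is inherited unchanged. The only point that warrants a line of care is the last display, where the ratio of $\d S_2$-integrals must be split as a product of two expectations using independence before one applies stationary-increment equality; apart from that, the derivation is a direct rewriting.
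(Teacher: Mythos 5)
Your proposal is correct and is exactly the intended derivation: the paper states the corollary as a direct consequence of Theorem \ref{main1}, and your specialization ($\lambda\equiv 1$, $K\equiv 0$) together with the use of stationary and independent increments of $S_2$ to identify $\E\bigl(S_2(2T)-S_2(T)\bigr)^{-1}=\E\,S_2(T)^{-1}$ and to factor the ratio expectation is precisely what is needed.
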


Now we apply our result to some concrete examples
of subordinators.

\begin{example}\label{ex1}
        Let $\sigma= I_{m\times m}$, $b=0$,
        and $l\in(0,m/2)$. Assume that
        $S_1$ is an $\alpha$-stable
        subordinator, which has no drift and its
        L\'{e}vy measure is given by $c_1x^{-1-\alpha}
        \I_{\{x>0\}}\d x$, and $S_2$ is
        a truncated $\beta$-stable
        subordinator, which has no drift and its L\'{e}vy
        measure is given by $c_2x^{-1-\beta}
        \I_{\{0<x<1\}}\d x$, where $\alpha,\beta\in(0,1)$
        and $c_1,c_2>0$
        are constants. Then there is
        some constant $C=C(m,d,l,\alpha,\beta,c_1,c_2)>0$ such
        that for any $T>0$, $x=(x^{(1)},x^{(2)}),
        y=(y^{(1)},y^{(2)})\in\R^{m+d}$, and
        $f\in\Bscr_b(\R^d)$ with $f\geq1$,
        \begin{align*}
            P_{2T}\log f(y)
            &\leq\log P_{2T}f(x)
            +C\bigg\{T^{-1/\alpha}|x^{(1)}-y^{(1)}|^2
            +T^{-1-l/\alpha}|x^{(2)}-y^{(2)}|^2\\
            &\qquad+\left(\left[
            |x^{(1)}|^{2(l-1)^+}+
            |y^{(1)}|^{2(l-1)^+}
            \right]T^{-l/\alpha}
            +T^{-(l\wedge1)/\alpha}\right)
            |x^{(1)}-y^{(1)}|^{2(l\wedge1)}
            \bigg\}.
        \end{align*}
    \end{example}

    \begin{example}\label{ex2}
        Let $\sigma= I_{m\times m}$, $b=0$,
        and $l\in(0,m/2)$. Assume that
        $S_1$ is an $\alpha$-stable
        subordinator, which has no drift and its
        L\'{e}vy measure is given by $c_1x^{-1-\alpha}
        \I_{\{x>0\}}\,\d x$,
        and $S_2$ is a relativistic
        $\beta$-stable subordinator, which has no
        drift and its L\'{e}vy
        measure is given by $c_2\e^{-\rho^\beta x}
        x^{-1-\beta}\I_{\{x>0\}}\,\d x$, where $\alpha,\beta\in(0,1)$
        and $c_1,c_2,\rho>0$
        are constants. Then there is
        some constant $C=C(m,d,l,\alpha,\beta,c_1,c_2,
        \rho)>0$ such
        that for any $T>0$, $x=(x^{(1)},x^{(2)}),
        y=(y^{(1)},y^{(2)})\in\R^{m+d}$, and
        $f\in\Bscr_b(\R^d)$ with $f\geq1$,
        \begin{align*}
            &P_{2T}\log f(y)
            \leq\log P_{2T}f(x)
            +C\bigg\{T^{-1/\alpha}|x^{(1)}-y^{(1)}|^2
            +T^{-l/\alpha}
            \left(T^{-1/\beta}\vee T^{-1}\right)
            |x^{(2)}-y^{(2)}|^2\\
            &\qquad\,+
            \left(T^{1-1/\beta}\vee1\right)
            \left(\left[
            |x^{(1)}|^{2(l-1)^+}+
            |y^{(1)}|^{2(l-1)^+}
            \right]T^{-l/\alpha}
            +T^{-(l\wedge1)/\alpha}\right)
            |x^{(1)}-y^{(1)}|^{2(l\wedge1)}
            \bigg\}.
        \end{align*}
    \end{example}

    The remaining part of this paper is organized as follows.
    By using coupling in two steps and an
    approximation argument, we establish in
    Section \ref{sec2} the log-Harnack inequalities
    for SDEs driven by non-random time-changed Brownian
    motions. Section \ref{sec3} is devoted to the proofs of
    Theorem \ref{main1} and Examples \ref{ex1}
    and \ref{ex2}.

\section{Log-Harnack inequalities under
deterministic time-changes}\label{sec2}

    For each $i\in\{1,2\}$,
    let $\ell_i:[0,\infty)\rightarrow[0,\infty)$ be
    a non-decreasing and c\`{a}dl\`{a}g function with
    $\ell_i(0)=0$. By \textbf{(H2)}, the following
    SDE for $X_t^{\ell_1,\ell_2}=(X_t^{(1),\ell_1},
    X_t^{(2),\ell_1,\ell_2})$
    has a unique non-explosive solution:
    \begin{equation}\label{changeeq}
        \left\{
        \begin{array}{l}
            \d X_t^{(1),\ell_1}=\sigma_t\,\d W_{\ell_1(t)}^{(1)},\\
            \d X_t^{(2),\ell_1,\ell_2}=b(t,X_t^{(2),\ell_1,\ell_2})
            \,\d t
            +|X_t^{(1),\ell_1}|^l\,\d W_{\ell_2(t)}^{(2)}.
        \end{array}
        \right.
    \end{equation}
    Let
    $$
        P_t^{\ell_1,\ell_2}f(x)=\E
        f\big(X_t^{\ell_1,\ell_2}(x)\big),\quad
        t\geq0,\,f\in\Bscr_b(\R^{m+d}), \,x\in\R^{m+d},
    $$
    where $X_t^{\ell_1,\ell_2}(x)
    =(X_t^{(1),\ell_1}(x),X_t^{(2),\ell_1,\ell_2}(x))$ is the solution
    to \eqref{changeeq} with $X_0^{\ell_1,\ell_2}=x\in\R^{m+d}$.

\begin{proposition}\label{change}
        Let $l\in(0,m/2)$ and
        assume that \textbf{\upshape(H1)} and
        \textbf{\upshape(H2)} hold.
        There is some constant
        $C=C(m,d,l)>0$ such that for any $T>0$, $x=(x^{(1)},x^{(2)}),
        y=(y^{(1)},y^{(2)})\in\R^{m+d}$, and
        $f\in\Bscr_b(\R^d)$ with $f\geq1$
        \begin{align*}
            &P_{2T}^{\ell_1,
            \ell_2}\log f(y)
            \leq\log P_{2T}^{\ell_1,
            \ell_2}f(x)
            +\frac{|x^{(1)}-y^{(1)}|^2}{2}
            \left(
            \int_0^T\lambda_r^{-2}\,
            \d\ell_1(r)
            \right)^{-1}\\
            &\qquad\quad+\frac{C\e^{2K(0,T)}}
            {\int_T^{2T}\e^{-2K(T,s)}
            \,\d\ell_2(s)}
            \Bigg\{|x^{(2)}-y^{(2)}|^2
            \left(
            \int_0^T\lambda_r^{-2}\,
            \d\ell_1(r)
            \right)
            ^{-l}
            \\
            &\quad\quad\qquad
            +\left(\left[
            |x^{(1)}|^{2(l-1)^+}+
            |y^{(1)}|^{2(l-1)^+}
            \right]
            \left(
            \int_0^T\lambda_r^{-2}\,
            \d\ell_1(r)
            \right)
            ^{-l}
            +
            \left(
            \int_0^T\lambda_r^{-2}\,
            \d\ell_1(r)
            \right)
            ^{-(l\wedge1)}
            \right)\\
            &\quad\quad\qquad\quad\times|x^{(1)}-y^{(1)}|^{2(l\wedge1)}
            \int_0^{T}\e^{-2K(0,s)}
            \,\d\ell_2(s)
            \Bigg\}.
        \end{align*}
    \end{proposition}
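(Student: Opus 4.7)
The plan is to implement the two-step coupling by change of measure from \cite{WX14}, adapted to the deterministic-time-changed setting. On $[0, T]$ I shift the first component so that $Y_T^{(1)} = X_T^{(1)}$; on $[T, 2T]$ I shift the second component so that $Y_{2T}^{(2)} = X_{2T}^{(2)}$. Since $W^{(1)}_{\ell_1(\cdot)}$ and $W^{(2)}_{\ell_2(\cdot)}$ are Gaussian martingales with deterministic quadratic variations, Girsanov's theorem applies exactly as in the classical diffusion case, and the proposition follows from the entropy inequality
$$
P_{2T}^{\ell_1,\ell_2}\log f(y)-\log P_{2T}^{\ell_1,\ell_2}f(x)\leq H(Q\,|\,P),
$$
where $Q$ is the coupling measure under which $Y_{2T}=X_{2T}$ almost surely and $Y$ has the same law as the solution from $y$ under $P$.

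For the first step, I introduce the deterministic bridge
$$
Y_t^{(1)}:=X_t^{(1)}+(y^{(1)}-x^{(1)})\,\frac{\int_t^T\lambda_r^{-2}\,\d\ell_1(r)}{\int_0^T\lambda_r^{-2}\,\d\ell_1(r)},\qquad t\in[0,T],
$$
so that $Y_0^{(1)}=y^{(1)}$ and $Y_T^{(1)}=X_T^{(1)}$; the extra drift picked up by $Y^{(1)}$ over $X^{(1)}$ is a deterministic multiple of $\lambda_t^{-2}\,\d\ell_1(t)$, which is absorbed via Girsanov's theorem into a new measure $Q_1$ under which $Y^{(1)}$ solves the first SDE in \eqref{changeeq} from $y^{(1)}$. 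Using \textbf{(H1)}, the first-step relative entropy is
$$
H(Q_1\,|\,P)\leq\frac{|x^{(1)}-y^{(1)}|^2}{2\int_0^T\lambda_r^{-2}\,\d\ell_1(r)},
$$
which is precisely the leading term of the statement. Under $Q_1$, the second component $Y^{(2)}$ evolves with diffusion coefficient $|Y_t^{(1)}|^l$ rather than $|X_t^{(1)}|^l$; applying It\^o's formula to $\e^{-2K(0,t)}|Y_t^{(2)}-X_t^{(2)}|^2$ together with \textbf{(H2)} yields
$$
\E^{Q_1}|Y_T^{(2)}-X_T^{(2)}|^2\leq\e^{2K(0,T)}\left\{|x^{(2)}-y^{(2)}|^2+\int_0^T\e^{-2K(0,s)}\,\E^{Q_1}\big||Y_s^{(1)}|^l-|X_s^{(1)}|^l\big|^2\,\d\ell_2(s)\right\}.
$$
The elementary inequality $\big||a|^l-|b|^l\big|^2\leq C\big(|a|^{2(l-1)^+}+|b|^{2(l-1)^+}\big)|a-b|^{2(l\wedge1)}$, combined with the pointwise bound $|Y_s^{(1)}-X_s^{(1)}|\leq|x^{(1)}-y^{(1)}|$, accounts for the $(l-1)^+$ and $l\wedge1$ exponents in the statement.

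For the second step, since $Y_T^{(1)}=X_T^{(1)}$ I continue with $Y_t^{(1)}=X_t^{(1)}$ on $[T,2T]$ and couple $Y^{(2)}$ to $X^{(2)}$ by a Girsanov shift of $W^{(2)}_{\ell_2(\cdot)}$ whose magnitude is proportional to $|X_t^{(1)}|^{-l}$ (this inversion is what forces the dimensional restriction). The shift is chosen so that the gap $Y_t^{(2)}-X_t^{(2)}$ is driven to zero at $t=2T$; the normalization $\int_T^{2T}\e^{-2K(T,s)}\,\d\ell_2(s)$ in the statement arises because the weight $\e^{-K(T,\cdot)}$ is exactly what is needed to neutralize the monotone contribution of $\textbf{(H2)}$ when one differentiates $\e^{-2K(T,t)}|Y_t^{(2)}-X_t^{(2)}|^2$. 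The conditional relative entropy of $Q_2$ with respect to $Q_1$, given $\mathcal F_T$, is then of order $|Y_T^{(2)}-X_T^{(2)}|^2$ divided by $\int_T^{2T}\e^{-2K(T,s)}\,\d\ell_2(s)$, times an $\mathcal F_T$-conditional average of $|X_t^{(1)}|^{-2l}$ against $\e^{-2K(T,t)}\d\ell_2(t)$. Combining with the chain rule $H(Q_2\,|\,P)=H(Q_1\,|\,P)+\E^{Q_1}H(Q_2\,|\,Q_1\,;\,\mathcal F_T)$ and the estimates of the previous paragraph produces the claimed inequality.

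The main technical obstacle will be the inverse-moment estimate for $|X_t^{(1)}|$, along with the $2(l-1)$-moments needed when $l>1$. For deterministic $\ell_1$, $X_t^{(1)}$ is a nondegenerate Gaussian with covariance $\int_0^t\sigma_s\sigma_s^\top\,\d\ell_1(s)$ (or, under $Q_1$, the same Gaussian shifted by an explicit drift), whose smallest eigenvalue is at least $\int_0^t\lambda_s^{-2}\,\d\ell_1(s)$ by \textbf{(H1)}. The condition $l<m/2$ then ensures $\E|X_t^{(1)}|^{-2l}\leq C(m,l)\big(\int_0^t\lambda_s^{-2}\,\d\ell_1(s)\big)^{-l}$, which is the origin of the factor $\big(\int_0^T\lambda_r^{-2}\,\d\ell_1(r)\big)^{-l}$ recurring throughout the statement. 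This dimensional restriction is essentially sharp, since $|x|^{-2l}$ ceases to be locally integrable on $\R^m$ once $2l\geq m$.
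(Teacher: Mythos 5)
Your coupling strategy, the entropy inequality, and the Gaussian inverse-moment estimate are exactly the ingredients of the paper's argument (your deterministic bridge in the first step coincides with the paper's construction, since by \eqref{j4d2vd} the difference $X^{(1)}-Y^{(1)}$ stays parallel to $x^{(1)}-y^{(1)}$). The genuine gap is the claim that ``Girsanov's theorem applies exactly as in the classical diffusion case'' for arbitrary non-decreasing c\`adl\`ag $\ell_1,\ell_2$. The proposition must cover time changes with jumps and flat pieces (in Section 3 the $\ell_i$ are subordinator paths), and there your second stage breaks down. The Girsanov shift of $W^{(2)}$ has to be written in the Brownian time scale $u=\ell_2(t)$ and be adapted to the filtration of $W^{(2)}$; at a jump time $t_0$ of $\ell_2$ the coupling direction $(X^{(2)}_{t_0}-Y^{(2)}_{t_0})/|X^{(2)}_{t_0}-Y^{(2)}_{t_0}|$ involves $W^{(2)}$ up to time $\ell_2(t_0)$, which lies strictly after $u$ for $u\in[\ell_2(t_0-),\ell_2(t_0))$, so the naive shift is anticipating. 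If one replaces it by its predictable version, the other pillar collapses: the identity $\d|\zeta|=|\zeta|^{-1}\langle\zeta,\d\zeta\rangle$, which is what forces the gap to decrease linearly to zero and hence $\tau^{(2)}\le 2T$, holds only for continuous paths; when $\d\ell_2$ has atoms the gap can overshoot zero at a jump and the coupling need not be successful. The same continuity is used on $[0,T]$ in the It\^o computation for $\E|X^{(2)}_T-Y^{(2)}_T|^2$, where the quadratic-variation term against $\d\ell_2$ appears.

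The paper deals with this by first proving the inequality for the regularized time changes $\ell_i^{\varepsilon_i}(t)=\varepsilon_i^{-1}\int_t^{t+\varepsilon_i}\ell_i(s)\,\d s+\varepsilon_i t$, which are absolutely continuous and strictly increasing, so that the inverses $\gamma_i^{\varepsilon_i}$ are genuine, the shifted drifts are adapted, and all paths are continuous (Lemma \ref{jhf5f}); it then lets $\varepsilon_2\downarrow0$ and $\varepsilon_1\downarrow0$. This limit passage is itself a substantial part of the proof: it requires the convergence of the approximating semigroups (Lemmas \ref{BDG}--\ref{uniform}), which is established under the auxiliary hypotheses \textbf{(A1)} (piecewise constant $\sigma$) and \textbf{(A2)} (Lipschitz drift), and these are removed by two further approximation layers in the proof of Proposition \ref{change}. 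None of this machinery appears in your proposal, so as written it proves the statement only for absolutely continuous, strictly increasing $\ell_i$. A minor further point: to bound the expectation of the product of the inverse-moment integral on $[T,2T]$ and the $(l-1)^+$-moment integral on $[0,T]$, one needs a H\"older argument with $p>1$ such that $pl<m/2$; your sketch leaves this decoupling implicit.
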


    Following the line of \cite{Zha13, WW14, WZ15, Den14, DS16},
    for $\varepsilon_1,\varepsilon_2\in(0,1)$, we define
    $$
        \ell_i^{\varepsilon_i}(t):=
        \frac{1}{\varepsilon_i}
        \int_t^{t+\varepsilon_i}\ell_i(s)\,\d s
        +\varepsilon_it
        =\int_0^1\ell_i(\varepsilon_is+t)\,\d s
        +\varepsilon_it,
        \quad i\in\{1,2\},\,t\geq0.
    $$
    It is clear that $\ell_i^{\varepsilon_i}$ is
    absolutely continuous and strictly increasing
    with
    \begin{equation}\label{app324}
        \ell_i^{\varepsilon_i}(t)\downarrow\ell_i(t)
        \quad\text{as $\varepsilon_i\downarrow0$}
    \end{equation}
    for all $t\geq0$. For each $i\in\{1,2\}$, denote
    by $\gamma_i^{\varepsilon_i}:
    [\ell_i^{\varepsilon_i}(0),\infty)\rightarrow[0,\infty)$
    the inverse function of $\ell_i^{\varepsilon_i}$.
    By definition, $\ell_i^{\varepsilon_i}
    (\gamma_i^{\varepsilon_i}(t))=t$ for
    $t\geq\ell_i^{\varepsilon_i}(0)$,
    $\gamma_i^{\varepsilon_i}(\ell_i^{\varepsilon_i}(t))=t$
    for $t\geq0$, and $t\mapsto\gamma_i^{\varepsilon_i}(t)$
    is absolutely continuous and strictly increasing.

    Consider the approximation
    equation for $X_t^{\ell_1^{\varepsilon_1},
    \ell_2^{\varepsilon_2}}=
    (X_t^{(1),\ell_1^{\varepsilon_1}},
    X_t^{(2),\ell_1^{\varepsilon_1},\ell_2^{\varepsilon_2}})$
    \begin{equation}\label{aceq}
        \left\{
        \begin{array}{l}
            \d X_t^{(1),\ell^{\varepsilon_1}_1}
            =\sigma_t\,\d W_{\ell_1^{\varepsilon_1}(t)
            -\ell_1^{\varepsilon_1}(0)}^{(1)},\\
            \d X_t^{(2),\ell_1^{\varepsilon_1},\ell_2^{\varepsilon_2}}
            =b(t,
            X_t^{(2),\ell_1^{\varepsilon_1},\ell_2^{\varepsilon_2}})
            \,\d t
            +|X_t^{(1),\ell_1^{\varepsilon_1}}|^l
            \,\d W_{\ell_2^{\varepsilon_2}(t)
            -\ell_2^{\varepsilon_2}(0)}^{(2)}.
        \end{array}
        \right.
    \end{equation}
    Denote by $X_t^{\ell_1^{\varepsilon_1},
    \ell_2^{\varepsilon_2}}(x)
    =(X_t^{(1),\ell_1^{\varepsilon_1}}(x),
    X_t^{(2),\ell_1^{\varepsilon_1},\ell_2^{\varepsilon_2}}(x))$ the unique non-explosive (strong) solution
    to \eqref{aceq} with
    $X_0^{\ell_1^{\varepsilon_1},
    \ell_2^{\varepsilon_2}}=x\in\R^{m+d}$, and let
    $$
        P_t^{\ell_1^{\varepsilon_1},
        \ell_2^{\varepsilon_2}}f(x)=\E
        f\big(X_t^{\ell_1^{\varepsilon_1},
        \ell_2^{\varepsilon_2}}(x)\big),\quad
        t\geq0,\,f\in\Bscr_b(\R^{m+d}), \,x\in\R^{m+d}.
    $$
    Note that \eqref{aceq} is indeed driven by Brownian
    motions and thus, as in \cite{WX14},
    the method of coupling in
    two steps and Girsanov
    transformation can be used to establish the log-Harnack
    inequality for
    $P_t^{\ell_1^{\varepsilon_1},\ell_2^{\varepsilon_2}}$.

    Observe that the regular conditional
    probability $\P(\cdot|\mathscr{F}^{(1)})$
    given $\mathscr{F}^{(1)}$ exists, where
    $\mathscr{F}^{(1)}$ is the $\sigma$-algebra
    generated by $\{W^{(1)}_t\,:\,t\geq0\}$. Let
    $\mathscr{F}^{(2)}_t$ and $\mathscr{F}_t$ be the
    $\sigma$-algebras
    generated by $\{W^{(2)}_s\,:\,0\leq s\leq t\}$
    and $\{W_s\,:\,0\leq s\leq t\}$, respectively.
    For any probability measure $\tilde{\P}$, we
    denote by $\E_{\tilde{\P}}$ the expectation
    w.r.t.\ $\tilde{\P}$. If $\tilde{\P}=\P$,
    we simply denote the expectation by $\E$
    as usual.

    \begin{lemma}\label{jhf5f}
        Fix $\varepsilon_1,\varepsilon_2\in(0,1]$,
        let $l\in(0,m/2)$, and
        assume that \textbf{\upshape(H1)}
        and \textbf{\upshape(H2)} hold.
        There is some constant
        $C=C(m,d,l)>0$ such that for any $T>0$, $x=(x^{(1)},x^{(2)}),
        y=(y^{(1)},y^{(2)})\in\R^{m+d}$, and
        $f\in\Bscr_b(\R^d)$ with $f\geq1$
        \begin{align*}
            &P_{2T}^{\ell_1^{\varepsilon_1},
            \ell_2^{\varepsilon_2}}\log f(y)
            \leq\log P_{2T}^{\ell_1^{\varepsilon_1},
            \ell_2^{\varepsilon_2}}f(x)
            +\frac{|x^{(1)}-y^{(1)}|^2}{2}
            \left(
            \int_0^T\lambda_r^{-2}\,
            \d\ell^{\varepsilon_1}_1(r)
            \right)^{-1}\\
            &\quad\quad+\frac{C\e^{2K(0,T)}}
            {\int_T^{2T}\e^{-2K(T,s)}
            \,\d\ell^{\varepsilon_2}_2(s)}
            \Bigg\{|x^{(2)}-y^{(2)}|^2
            \left(
            \int_0^T\lambda_r^{-2}\,
            \d\ell^{\varepsilon_1}_1(r)
            \right)^{-l}
            \\
            &\quad\qquad+
            \left(\left[
            |x^{(1)}|^{2(l-1)^+}+
            |y^{(1)}|^{2(l-1)^+}
            \right]
            \left(
            \int_0^T\lambda_r^{-2}\,
            \d\ell^{\varepsilon_1}_1(r)
            \right)^{-l}
            +
            \left(
            \int_0^T\lambda_r^{-2}\,
            \d\ell^{\varepsilon_1}_1(r)
            \right)^{-(l\wedge1)}
            \right)\\
            &\quad\quad\qquad\times|x^{(1)}-y^{(1)}|^{2(l\wedge1)}
            \int_0^{T}\e^{-2K(0,s)}
            \,\d\ell^{\varepsilon_2}_2(s)
            \Bigg\}.
        \end{align*}
    \end{lemma}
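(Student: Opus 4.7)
My plan is to adapt the two-step coupling-by-change-of-measure of Wang--Xu \cite{WX14}. Since each $\ell_i^{\varepsilon_i}$ is absolutely continuous and strictly increasing, the time-changed Brownian motions $B_t^{(i)}:=W^{(i)}_{\ell_i^{\varepsilon_i}(t)-\ell_i^{\varepsilon_i}(0)}$ are continuous martingales with absolutely continuous quadratic variation $\d\langle B^{(i)}\rangle_t=(\ell_i^{\varepsilon_i})'(t)\,\d t\cdot I$, so \eqref{aceq} is effectively a Brownian-driven SDE with locally bounded deterministic coefficients. Writing $X_t:=X_t^{\ell_1^{\varepsilon_1},\ell_2^{\varepsilon_2}}(x)$, I will construct a coupled process $Y_t$ with $Y_0=y$ on the same probability space, together with a probability measure $\tilde\P=R\,\d\P$, so that under $\tilde\P$ the law of $Y$ coincides with that of $X_t^{\ell_1^{\varepsilon_1},\ell_2^{\varepsilon_2}}(y)$ and $Y_{2T}=X_{2T}$ a.s. The entropy (Young) inequality then gives
$$P_{2T}^{\ell_1^{\varepsilon_1},\ell_2^{\varepsilon_2}}\log f(y)=\E_{\tilde\P}\log f(X_{2T})\leq\log P_{2T}^{\ell_1^{\varepsilon_1},\ell_2^{\varepsilon_2}}f(x)+\E_{\tilde\P}[\log R],$$
reducing the lemma to an upper bound on $\E_{\tilde\P}[\log R]$.

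On $[0,T]$ I force $Y_T^{(1)}=X_T^{(1)}$ by adding a deterministic drift $u_t$ to the Brownian motion driving $Y^{(1)}$, with $u_t\propto\lambda_t^{-2}(\ell_1^{\varepsilon_1})'(t)\,(y^{(1)}-x^{(1)})$ and the proportionality constant fixed by $\int_0^T u_t\,\d t=y^{(1)}-x^{(1)}$. Since $|\sigma_t^{-1}u_t|\leq\lambda_t|u_t|$, the Girsanov cost of this first step is bounded (sharply, by Cauchy--Schwarz) by $\tfrac12|x^{(1)}-y^{(1)}|^2\bigl(\int_0^T\lambda_r^{-2}\,\d\ell_1^{\varepsilon_1}(r)\bigr)^{-1}$, giving the first explicit term of the asserted inequality. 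Meanwhile $Y^{(2)}$ evolves under the natural SDE with diffusion coefficient $|Y_t^{(1)}|^l$. On $[T,2T]$, because $Y_T^{(1)}=X_T^{(1)}$, I set $Y_t^{(1)}\equiv X_t^{(1)}$ so that $|Y_t^{(1)}|^l=|X_t^{(1)}|^l$, and then introduce a Girsanov shift $\eta_t$ of $B^{(2)}$ engineered so that $\zeta_t:=Y_t^{(2)}-X_t^{(2)}$ evolves deterministically toward $\zeta_{2T}=0$; a natural choice leads to $\eta_t$ containing the factor $|X_t^{(1)}|^{-l}$ multiplied by the kernel $\e^{-2K(T,t)}(\ell_2^{\varepsilon_2})'(t)/\int_T^{2T}\e^{-2K(T,s)}\,\d\ell_2^{\varepsilon_2}(s)$, and this is exactly where the hypothesis $l<m/2$ is needed.

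The second-step cost $\tfrac12\E_{\tilde\P}\!\int_T^{2T}|\eta_t|^2/(\ell_2^{\varepsilon_2})'(t)\,\d t$ is controlled through three ingredients: (a) the negative Gaussian-moment bound
$$\E_{\tilde\P}|X_T^{(1)}|^{-2l}\leq C(m,l)\Bigl(\int_0^T\lambda_r^{-2}\,\d\ell_1^{\varepsilon_1}(r)\Bigr)^{-l},$$
which holds for $l<m/2$ because under $\tilde\P$ the vector $X_T^{(1)}-x^{(1)}$ is Gaussian with covariance $\geq\int_0^T\lambda_r^{-2}\,\d\ell_1^{\varepsilon_1}(r)\cdot I$ in the positive-semidefinite order; (b) the Gronwall-type estimate
$$\E_{\tilde\P}|\zeta_T|^2\leq\e^{2K(0,T)}\left[|x^{(2)}-y^{(2)}|^2+\int_0^T\e^{-2K(0,s)}\,\E_{\tilde\P}\bigl||Y_s^{(1)}|^l-|X_s^{(1)}|^l\bigr|^2\,\d\ell_2^{\varepsilon_2}(s)\right],$$
obtained from Itô on $|\zeta_t|^2$, condition \textbf{(H2)}, and Gronwall; and (c) the elementary inequality $\bigl||a|^l-|b|^l\bigr|^2\leq C_l\,|a-b|^{2(l\wedge 1)}\bigl(|a|^{2(l-1)^+}+|b|^{2(l-1)^+}\bigr)$, combined with the deterministic bound $|Y_s^{(1)}-X_s^{(1)}|\leq|y^{(1)}-x^{(1)}|$ (guaranteed by the construction of $u$) and Gaussian-moment estimates for $\E_{\tilde\P}|X_s^{(1)}|^{2(l-1)^+}$, $\E_{\tilde\P}|Y_s^{(1)}|^{2(l-1)^+}$.

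The main technical obstacle is the bookkeeping required to assemble (a)--(c) into precisely the combination of factors in the statement: the initial-condition part of (b) pairs with (a) to give $|x^{(2)}-y^{(2)}|^2\bigl(\int_0^T\lambda_r^{-2}\,\d\ell_1^{\varepsilon_1}(r)\bigr)^{-l}$, while the integral part of (b), combined with (a), (c), and the Gaussian-moment bounds, produces the $|x^{(1)}-y^{(1)}|^{2(l\wedge 1)}$ terms; the splitting into an $\bigl(\int_0^T\lambda_r^{-2}\,\d\ell_1^{\varepsilon_1}(r)\bigr)^{-l}$-piece (carrying the factor $|x^{(1)}|^{2(l-1)^+}+|y^{(1)}|^{2(l-1)^+}$) and an $\bigl(\int_0^T\lambda_r^{-2}\,\d\ell_1^{\varepsilon_1}(r)\bigr)^{-(l\wedge 1)}$-piece reflects two distinct regimes in the moment estimates for $X_s^{(1)}$ on $[0,T]$. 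Throughout, the constant $C$ must be kept dependent only on $m$, $d$, and $l$.
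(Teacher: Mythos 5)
Your plan is the paper's own argument: a two-step coupling by change of measure in the style of Wang--Xu, with the first-step Girsanov cost giving exactly the term $\frac{|x^{(1)}-y^{(1)}|^2}{2}\bigl(\int_0^T\lambda_r^{-2}\,\d\ell_1^{\varepsilon_1}(r)\bigr)^{-1}$, the identification $Y^{(1)}\equiv X^{(1)}$ on $[T,2T]$, the entropy (Young) inequality, the negative Gaussian moment bound (indeed the place where $l<m/2$ enters), It\^o plus \textbf{(H2)} for $|Y^{(2)}-X^{(2)}|^2$ on $[0,T]$, and the elementary inequality for $\bigl||a|^l-|b|^l\bigr|$. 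Two technical points are, however, glossed over in a way that would derail a naive write-up.

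First, after conditioning on the first Brownian motion, the second-step entropy is bounded by (a constant times) the product of two functionals of the \emph{same} path $X^{(1)}$: $I_1=\int_T^{2T}\e^{-2K(T,s)}|X_s^{(1)}|^{-2l}\,\d\ell_2^{\varepsilon_2}(s)$, coming from the $|X^{(1)}|^{-2l}$ factor in $|\eta|^2$, and $I_2=\int_0^T\e^{-2K(0,s)}\bigl(|x^{(1)}-y^{(1)}|+|X_s^{(1)}|\bigr)^{2(l-1)^+}\,\d\ell_2^{\varepsilon_2}(s)$, coming from your estimates (b)--(c). You cannot replace $\E[I_1I_2]$ by $\E I_1\cdot\E I_2$, and Cauchy--Schwarz would require $\E I_1^2<\infty$, i.e.\ $2l<m/2$, which is stronger than the hypothesis. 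The paper decouples the product by H\"older's inequality with an exponent $p>1$ chosen so that $pl<m/2$ (possible precisely because $l<m/2$ is strict), applying the negative-moment lemma with $\theta=pl$ to $\E I_1^p$ and the conjugate exponent to the stochastic-integral part of $I_2$; this is also what produces the $\bigl(\int_0^T\lambda_r^{-2}\,\d\ell_1^{\varepsilon_1}(r)\bigr)^{-(l\wedge1)}$ piece, via $-l+(l-1)^+=-(l\wedge1)$. Second, the second-step Girsanov density contains the unbounded factor $|Y^{(1)}|^{-l}$, so Novikov's criterion does not apply to it as it does in step one; the paper truncates the shift, proves the entropy bound uniformly in the truncation level, and passes to the limit by Fatou to conclude that the untruncated density is a true martingale. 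Both points are repairable, but they are the actual content of the ``bookkeeping'' your plan defers.
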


    \begin{proof}
        We divide the proof into four steps.

        \emph{Step 1:}
        Fix $T>0$, $x=(x^{(1)},x^{(2)}),
        y=(y^{(1)},y^{(2)})\in\R^{m+d}$ and
        let $Y_t^{(1)}$ solve the equation
        \begin{equation}\label{kkuugg}
        \left\{
        \begin{array}{l}
            \d Y_t^{(1)}=
            \sigma_t\,\d W_{\ell_1^{\varepsilon_1}(t)
            -\ell_1^{\varepsilon_1}(0)}^{(1)}
            +\xi^{(1)}_t\frac{X_t^{(1),\ell^{\varepsilon_1}_1}(x)-Y_t^{(1)}}
            {\big|X_t^{(1),\ell^{\varepsilon_1}_1}(x)-Y_t^{(1)}
            \big|}
            \I_{[0,\tau^{(1)})}(t)
            \,\d\ell_1^{\varepsilon_1}(t),\\
            Y_0^{(1)}=y^{(1)},
        \end{array}
        \right.
        \end{equation}
        where
        $$
            \xi^{(1)}_t:=\frac{|x^{(1)}-y^{(1)}|}
            {\int_0^T\lambda_r^{-2}\,
            \d\ell^{\varepsilon_1}_1(r)}
            \lambda_t^{-2}\quad \text {and} \quad
            \tau^{(1)}:=\inf\big\{t\geq0\,:\,
            X_t^{(1),\ell^{\varepsilon_1}_1}(x)=Y_t^{(1)}
            \big\}.
        $$
        Since
        $$
            \R^m\times\R^m\ni(z,z')\mapsto\I_{\{z\neq z'\}}
            \frac{z-z'}{|z-z'|}\in\R^m
        $$
        is locally Lipschitz continuous off
        the diagonal, the coupling
        $(X_t^{(1),\ell^{\varepsilon_1}_1}(x),Y_t^{(1)})$
        is well-defined and unique for $t<\tau^{(1)}$.
        If $\tau^{(1)}<\infty$, we set
        $Y_t^{(1)}=X_t^{(1),\ell^{\varepsilon_1}_1}(x)$ for
        $t\in[\tau^{(1)},\infty)$. In this way,
        we can construct a unique
        solution $(Y_t^{(1)})_{t\geq0}$
        to \eqref{kkuugg}. By the
        differential formula
        \begin{equation}\label{diffe12}
            \d|\zeta|=\I_{\{\zeta\neq 0\}}|\zeta|^{-1}
            \langle\zeta,\d\zeta\rangle,
        \end{equation}
        we have for $t<\tau^{(1)}$ that
        \begin{equation}\label{j4d2vd}
        \begin{aligned}
            \big|X_t^{(1),\ell^{\varepsilon_1}_1}(x)
            -Y_t^{(1)}\big|
            &=|x^{(1)}-y^{(1)}|
            -\int_0^t\xi^{(1)}_s\,
            \d\ell^{\varepsilon_1}_1(s)\\
            &=\left(1-
            \frac{\int_0^t\lambda_s^{-2}\,
            \d\ell^{\varepsilon_1}_1(s)}
            {\int_0^T\lambda_r^{-2}\,
            \d\ell^{\varepsilon_1}_1(r)}
            \right)
            |x^{(1)}-y^{(1)}|.
        \end{aligned}
        \end{equation}
        Then it must be $\tau^{(1)}\leq T$. Indeed,
        if $\tau^{(1)}(\omega)>T$ for
        some $\omega\in\Omega$, we can take $t=T$
        in the above equality to get
        $$
            0<|X_T^{(1),\ell^{\varepsilon_1}_1}(x)-Y_T^{(1)}|
            (\omega)=0,
        $$
        which is absurd. Let
        $$
            \widetilde{W}_t^{(1)}=
            W_t^{(1)}+\int_0^t\eta_s^{(1)}\,\d s
            \quad \text{and}\quad
            M_t^{(1)}=-\int_0^t\langle\eta_s^{(1)}
            ,\d W_s^{(1)}\rangle
            \quad \text{for $t\geq0$},
        $$
        where
        $$
            \eta_s^{(1)}:=
            \I_{[0,\tau^{(1)})}
            \big(\gamma_1^{\varepsilon_1}(s+\ell_1^{\varepsilon_1}(0))
            \big)
            \,\xi^{(1)}_
            {\gamma_1^{\varepsilon_1}(s+\ell_1^{\varepsilon_1}(0))}
            \frac{\sigma_{\gamma_1^{\varepsilon_1}
            (s+\ell_1^{\varepsilon_1}(0))}
            ^{-1}
            \big(X_{\gamma_1^{\varepsilon_1}(s+\ell_1^{\varepsilon_1}(0))}
            ^{(1),\ell^{\varepsilon_1}_1}(x)
            -Y_{\gamma_1^{\varepsilon_1}(s+\ell_1^{\varepsilon_1}(0))}
            ^{(1)}\big)}
            {\big|X_{\gamma_1^{\varepsilon_1}(s+\ell_1^{\varepsilon_1}(0))}
            ^{(1),\ell^{\varepsilon_1}_1}(x)
            -Y_{\gamma_1^{\varepsilon_1}(s+\ell_1^{\varepsilon_1}(0))}
            ^{(1)}\big|}
        $$
        for $s\geq0$.
        By \textbf{(H1)}, we have for any $s\in[0,T]$
        $$
            \big|\eta^{(1)}_{\ell_1^{\varepsilon_1}(s)
            -\ell_1^{\varepsilon_1}(0)}\big|
            \leq\xi^{(1)}_s
            \frac{|\sigma_s^{-1}
            (X_s^{(1),\ell^{\varepsilon_1}_1}(x)-Y_s^{(1)})|}
            {|X_s^{(1),\ell^{\varepsilon_1}_1}(x)-Y_s^{(1)}|}
            \leq\frac{|x^{(1)}-y^{(1)}|}
            {\int_0^T\lambda_r^{-2}\,
            \d\ell^{\varepsilon_1}_1(r)}
            \lambda_s^{-1},
        $$
        which implies that the compensator of
        the martingale $M_t$ satisfies
        $$
            \langle M^{(1)}\rangle_{
            t}
            \leq\int_0^T\big|\eta^{(1)}_{\ell_1^{\varepsilon_1}(s)
            -\ell_1^{\varepsilon_1}(0)}\big|^2\,\d
            \ell_1^{\varepsilon_1}(s)
            \leq\frac{|x^{(1)}-y^{(1)}|^2}
            {\int_0^T\lambda_r^{-2}\,
            \d\ell^{\varepsilon_1}_1(r)},\quad t\geq0.
        $$
        This, together with Novikov's criterion, yields
        that $\E R^{(1)}_t=1$, where
        $$
            R^{(1)}_t:=\exp\left[
            M^{(1)}_{t}
            -\frac12
            \langle M^{(1)}\rangle_{
            t}
            \right],\quad t\geq0.
        $$
        According to Girsanov's theorem, for
        any $t\geq0$, $(\widetilde{W}_s^{(1)})_{s\geq0}$
        is an $m$-dimensional Brownian motion under
        the new
        probability measure $R^{(1)}_t\P$. Thus,
        for all $t\geq0$,
        \begin{equation}\label{entropy1}
        \begin{aligned}
            \E\left[R^{(1)}_t\log R^{(1)}_t\right]
            &=\E_{R^{(1)}_t\P}\left[
            -\int_0^{t}
            \eta^{(1)}_s\,\d\widetilde{W}_s^{(1)}
            +\frac12
            \langle M^{(1)}\rangle_{t}
            \right]\\
            &\leq\frac{|x^{(1)}-y^{(1)}|^2}{2}
            \left(
            \int_0^T\lambda_r^{-2}\,
            \d\ell^{\varepsilon_1}_1(r)
            \right)^{-1}.
        \end{aligned}
        \end{equation}

    \emph{Step 2:}
        Consider the following SDE
        \begin{equation}\label{fdex45}
        \left\{
        \begin{array}{l}
            \d Y_t^{(2)}=
            b(t,Y_t^{(2)})\,\d t
            +|Y_t^{(1)}|^l
            \,\d W_{\ell_2^{\varepsilon_2}(t)
            -\ell_2^{\varepsilon_2}(0)}^{(2)}\\
            \qquad\qquad\qquad\qquad\quad
            +\xi_t^{(2)}
            \frac{X_t^{(2),\ell_1^{\varepsilon_1},
            \ell^{\varepsilon_2}_1}(x)-Y_t^{(2)}}
            {\big|X_t^{(2),\ell_1^{\varepsilon_1},
            \ell^{\varepsilon_2}_2}(x)-Y_t^{(2)}\big|}
            \I_{[T,\tau^{(2)})}(t)
            \,\d\ell_2^{\varepsilon_2}(t),\\
            Y_0^{(2)}=y^{(2)},
        \end{array}
        \right.
        \end{equation}
        where
        \begin{align*}
            \xi_t^{(2)}&:=\frac
            {\e^{-K(T,t)}}
            {\int_T^{2T}
            \e^{-2K(T,s)}
            \,
            \d\ell_2^{\varepsilon_2}(s)}
            \big|X_T^{(2),\ell_1^{\varepsilon_1},
            \ell^{\varepsilon_2}_2}(x)-Y_T^{(2)}\big|
            ,\quad t\geq0,\\
            \tau^{(2)}&:=\inf\big\{t\geq T\,:\,
            X_t^{(2),\ell_1^{\varepsilon_1},
            \ell^{\varepsilon_2}_2}(x)=Y_t^{(2)}
            \big\}.
        \end{align*}
        Since $Y_t^{(1)}$ is now fixed, the
        equation \eqref{fdex45} has
        a unique solution for $t<\tau^{(2)}$. Let
        $Y_t^{(2)}=X_t^{(2),\ell^{\varepsilon_2}_2}(x)$ for
        $t\in[\tau^{(2)},\infty)$. Thus, $Y_t^{(2)}$ solves
        \eqref{fdex45} for all $t\geq0$. Noting that
        $X_t^{(1),\ell_1^{\varepsilon_1}}(x)=Y_t^{(1)}$ for
        $t\geq T$, it follows from \eqref{diffe12} and \textbf{\upshape(H2)} that
        for $t\in[T,\tau^{(2)})$
        \begin{align*}
            &\big|X_t^{(2),\ell_1^{\varepsilon_1},
            \ell^{\varepsilon_2}_2}(x)-Y_t^{(2)}\big|
            \e^{-K(T,t)}
            -\big|X_T^{(2),\ell_1^{\varepsilon_1},
            \ell^{\varepsilon_2}_2}(x)-Y_T^{(2)}\big|\\
            &\quad=-\int_T^t\xi_s^{(2)}
            \e^{-K(T,s)}\,\d\ell_2^{\varepsilon_2}(s)
            -\int_T^tk(s)
            \big|X_s^{(2),\ell^{\varepsilon_2}_2}(x)
            -Y_s^{(2)}\big|
            \e^{-K(T,s)}
            \,\d s\\
            &\qquad+\int_T^t
            \frac{\big\langle
            X_s^{(2),\ell^{\varepsilon_2}_2}(x)-Y_s^{(2)},
            b(s,
            X_s^{(2),\ell^{\varepsilon_2}_2}(x))
            -b(s,Y_s^{(2)})
            \big\rangle}
            {|X_s^{(2),\ell^{\varepsilon_2}_2}-Y_s^{(2)}|}
            \e^{-K(T,s)}
            \,\d s\\
            &\quad\leq
            -\int_T^t\xi_s^{(2)}
            \e^{-K(T,s)}\,\d\ell_2^{\varepsilon_2}(s).
        \end{align*}
        Similarly as in the first part of the proof, it is easy to see that
        this implies $\tau^{(2)}\leq 2T$.
        For $t\geq0$ and $n\in\N$, let
        \begin{align*}
            &\eta_t^{(2)}=
            \I_{[T,\tau^{(2)})}
            \big(\gamma_2^{\varepsilon_2}(t+\ell_2^{\varepsilon_2}(0))
            \big)
            \,\xi^{(2)}_{
            \gamma_2^{\varepsilon_2}
            (t+\ell_2^{\varepsilon_2}(0))}
            |Y_{\gamma_2^{\varepsilon_2}(t+\ell_2^{\varepsilon_2}(0))}
            ^{(1)}|^{-l}
            \frac{X_{\gamma_2^{\varepsilon_2}(t+\ell_2^{\varepsilon_2}(0))}
            ^{(2),\ell_1^{\varepsilon_1},\ell^{\varepsilon_2}_2}(x)
            -Y_{\gamma_2^{\varepsilon_2}(t+\ell_2^{\varepsilon_2}(0))}
            ^{(1)}}
            {\big|X_{\gamma_2^{\varepsilon_2}(t+\ell_2^{\varepsilon_2}(0))}
            ^{(2),\ell_1^{\varepsilon_1},\ell^{\varepsilon_2}_2}(x)
            -Y_{\gamma_2^{\varepsilon_2}(t+\ell_2^{\varepsilon_2}(0))}
            ^{(1)}\big|},\\
            &\eta_t^{(2)}(n)=\eta_t^{(2)}
            \I_{\{|\eta_t^{(2)}|\leq n\}},\\
            &R^{(2)}_t=\exp\left[
            -\int_{\ell_2^{\varepsilon_2}(T)
            -\ell_2^{\varepsilon_2}(0)}^t
            \langle\eta_s^{(2)},\d W_s^{(2)}\rangle
            -\frac12\int_{\ell_2^{\varepsilon_2}(T)
            -\ell_2^{\varepsilon_2}(0)}^t
            |\eta_s^{(2)}|^2\,\d s
            \right],\\
            &R^{(2)}_t(n)=\exp\left[
            -\int_{\ell_2^{\varepsilon_2}(T)
            -\ell_2^{\varepsilon_2}(0)}^t
            \langle\eta_s^{(2)}(n),\d W_s^{(2)}\rangle
            -\frac12\int_{\ell_2^{\varepsilon_2}(T)
            -\ell_2^{\varepsilon_2}(0)}^t
            |\eta_s^{(2)}(n)|^2\,\d s
            \right].
        \end{align*}
        By Girsanov's theorem, under the weighted probability
        measure $R^{(2)}_{
        \ell_2^{\varepsilon_2}(2T)-
        \ell_2^{\varepsilon_2}(0)}(n)\P(\cdot|\mathscr{F}^{(1)})$,
        the process
        $$
            \widetilde{W}_t^{(2),n}:=
            W_t^{(2)}+\int_{
            \ell_2^{\varepsilon_2}(T)
            -\ell_2^{\varepsilon_2}(0)}^{t\vee
            [\ell_2^{\varepsilon_2}(T)
            -\ell_2^{\varepsilon_2}(0)]}
            \eta_s^{(2)}(n)\,\d s,
            \quad t\geq0,
        $$
        is a standard $d$-dimensional Brownian motion. Then we have
        for all $n\in\N$ and
        $t\geq0$
        \begin{align*}
            \E_{\P(\cdot|\mathscr{F}^{(1)})}\big[R^{(2)}_t(n)
            \log R^{(2)}_t(n)\big]
            &=\E_{R^{(2)}_t(n)\P(\cdot|\mathscr{F}^{(1)})}\big[
            \log R^{(2)}_t(n)\big]\\
            &=\frac12\E_{R^{(2)}_t(n)\P(\cdot|\mathscr{F}^{(1)})}\left[
            \int_{\ell_2^{\varepsilon_2}(T)
            -\ell_2^{\varepsilon_2}(0)}^t
            |\eta_s^{(2)}(n)|^2\,\d s
            \right]\\
            &\leq\frac12\E_{R^{(2)}_t(n)\P(\cdot|\mathscr{F}^{(1)})}\left[
            \int_{T}^{2T}
            |\eta_{\ell_2^{\varepsilon_2}(s)-
            \ell_2^{\varepsilon_2}(0)}^{(2)}(n)|^2\,\d
            \ell_2^{\varepsilon_2}(s)
            \right]\\
            &\leq\frac12\E_{\P(\cdot|\mathscr{F}^{(1)})}\left[
            R^{(2)}_t(n)\int_{T}^{2T}
            |\xi_s^{(2)}|^2|Y_s^{(1)}|^{-2l}\,\d
            \ell_2^{\varepsilon_2}(s)
            \right]\\
            &=\frac{\E_{\P(\cdot|\mathscr{F}^{(1)})}\left[
            R^{(2)}_t(n)\big|X_T^{(2),\ell_1^{\varepsilon_1},
            \ell_2^{\varepsilon_2}}(x)
            -Y_T^{(2)}\big|^2
            \right]}
            {2\left(
            \int_T^{2T}
            \e^{-2K(T,s)}
            \,\d\ell_2
            ^{\varepsilon_2}(s)
            \right)^2}\\
            &\qquad\times\int_T^{2T}
            \e^{-2K(T,s)}
            |Y_s^{(1)}|^{-2l}
            \,\d\ell_2
            ^{\varepsilon_2}(s)\\
            &=\frac{\E_{\P(\cdot|\mathscr{F}^{(1)})}
            \big|X_T^{(2),\ell_1^{\varepsilon_1},
            \ell_2^{\varepsilon_2}}(x)
            -Y_T^{(2)}\big|^2
            }
            {2\left(
            \int_T^{2T}\e^{-2K(T,s)}\,\d\ell_2
            ^{\varepsilon_2}(s)
            \right)^2}\\
            &\qquad\times\int_T^{2T}\e^{-2K(T,s)}
            |Y_s^{(1)}|^{-2l}
            \,\d\ell_2
            ^{\varepsilon_2}(s),
        \end{align*}
        where in the last equality we have used the fact that,
        for $t\geq0$, $R^{(2)}_t(n)$ is an $\mathscr{F}_t^{(2)}$-martingale
        under $\P(\cdot|\mathscr{F}^{(1)})$ and
        $R^{(2)}_{\ell_2^{\varepsilon_2}(T)
        -\ell_2^{\varepsilon_2}(0)}(n)=1$. By
        It\^{o}'s formula, \textbf{(H2)} and the inequality
        $$
            \left(|u|^l-|v|^l\right)^2\leq(l\vee1)^2
            |u-v|^{2(l\wedge1)}\left(|v-u|+|v|\right)
            ^{2(l-1)^+},\quad u,v\in\R^d,
        $$
        we get that for $t\in[0,T]$
        \begin{align*}
            &\d\big|X_t^{(2),\ell_1^{\varepsilon_1},
            \ell_2^{\varepsilon_2}}(x)
            -Y_t^{(2)}\big|^2\\
            &=2\big\langle X_t^{(2),\ell_1^{\varepsilon_1},
            \ell_2^{\varepsilon_2}}(x)
            -Y_t^{(2)},b(t,X_t^{(2),\ell_1^{\varepsilon_1},
            \ell_2^{\varepsilon_2}}(x))
            -b(t,Y_t^{(2)})
            \big\rangle\,\d t\\
            &\quad+2\big(|X_t^{(1),\ell_1^{\varepsilon_1}}(x)|^l
            -|Y_t^{(1)}|^l\big)
            \big\langle X_t^{(2),\ell_1^{\varepsilon_1},
            \ell_2^{\varepsilon_2}}(x)
            -Y_t^{(2)},
            \d W^{(2)}_{\ell_2^{\varepsilon_2}(t)
            -\ell_2^{\varepsilon_2}(0)}\big\rangle\\
            &\quad+d\big(|X_t^{(1),\ell_1^{\varepsilon_1}}(x)|^l
            -|Y_t^{(1)}|^l\big)^2\,\d\ell_2^{\varepsilon_2}(t)\\
            &\leq2k(t)\big|X_t^{(2),\ell_1^{\varepsilon_1},
            \ell_2^{\varepsilon_2}}(x)
            -Y_t^{(2)}\big|^2\,\d t\\
            &\quad+2\big(|X_t^{(1),\ell_1^{\varepsilon_1}}(x)|^l
            -|Y_t^{(1)}|^l\big)
            \big\langle X_t^{(2),\ell_1^{\varepsilon_1},
            \ell_2^{\varepsilon_2}}(x)
            -Y_t^{(2)},
            \d W^{(2)}_{\ell_2^{\varepsilon_2}(t)
            -\ell_2^{\varepsilon_2}(0)}\big\rangle\\
            &\quad+d(l\vee1)^2
            \big|X_t^{(1),\ell_1^{\varepsilon_1}}(x)
            -Y_t^{(1)}\big|^{2(l\wedge1)}
            \big(
            \big|Y_t^{(1)}-X_t^{(1),\ell_1^{\varepsilon_1}}(x)
            \big|
            +|Y_t^{(1)}|
            \big)^{2(l-1)^+}
            \,\d\ell_2^{\varepsilon_2}(t).
        \end{align*}
        Since it follows from \eqref{j4d2vd} that
        $\big|X_t^{(1),\ell_1^{\varepsilon_1}}(x)
            -Y_t^{(1)}\big|\leq|x^{(1)}-y^{(1)}|$,
       this implies that for $t\in[0,T]$
        \begin{align*}
            &\e^{-2K(0,T)}
            \E_{\P(\cdot|\mathscr{F}^{(1)})}
            \big|X_T^{(2),\ell_1^{\varepsilon_1},
            \ell_2^{\varepsilon_2}}(x)
            -Y_T^{(2)}\big|^2-
            |x^{(2)}-y^{(2)}|^2\\
            &\leq
            d(l\vee1)^2
            \int_0^T
            \big|X_t^{(1),\ell_1^{\varepsilon_1}}(x)
            -Y_t^{(1)}\big|^{2(l\wedge1)}
            \big(
            \big|Y_t^{(1)}-X_t^{(1),\ell_1^{\varepsilon_1}}(x)\big|
            +|Y_t^{(1)}|
            \big)^{2(l-1)^+}
            \e^{-2K(0,t)}
            \,\d\ell_2^{\varepsilon_2}(t)\\
            &\leq
            d(l\vee1)^2
            |x^{(1)}-y^{(1)}|^{2(l\wedge1)}
            \int_0^T
            \big(
            |x^{(1)}-y^{(1)}|
            +|Y_t^{(1)}|
            \big)^{2(l-1)^+}
            \e^{-2K(0,t)}
            \,\d\ell_2^{\varepsilon_2}(t).
        \end{align*}
        Now we know
        that for all $n\in\N$ and
        $t\geq0$
        \begin{equation}\label{entropy2n}
        \begin{aligned}
            &\E_{\P(\cdot|\mathscr{F}^{(1)})}\big[R^{(2)}_t(n)
            \log R^{(2)}_t(n)\big]\cdot
            \frac{2\left(
            \int_T^{2T}\e^{
            -2K(T,s)
            }\,\d\ell_2
            ^{\varepsilon_2}(s)
            \right)^2}
            {\e^{2K(0,T)}
            \int_T^{2T}
            \e^{-2K(T,s)}
            |Y_s^{(1)}|^{-2l}\,\d\ell_2
            ^{\varepsilon_2}(s)
            }\leq|x^{(2)}-y^{(2)}|^2
            \\
            &\qquad
            +
            d(l\vee1)^2
            |x^{(1)}-y^{(1)}|^{2(l\wedge1)}
            \int_0^T
            \big(
            |x^{(1)}-y^{(1)}|
            +|Y_s^{(1)}|
            \big)^{2(l-1)^+}
            \e^{
            -2K(0,s)
            }\,\d\ell_2
            ^{\varepsilon_2}(s).
        \end{aligned}
        \end{equation}

    \emph{Step 3:}
        For $t\geq0$ and $n\in\N$,
        let $R_t(n)=R^{(1)}_t\cdot R^{(2)}_t(n)$
        and $R_t=R^{(1)}_t\cdot R^{(2)}_t$.
        Since for any $t\geq0$ and $s\geq0$,
        the distribution of $Y_s^{(1)}$ under
        $R^{(1)}_t\P$ coincides with that of
        $X_s^{(1),\ell_1^{\varepsilon_1}}(x)$ under $\P$,
        it follows from \eqref{entropy1}
        and \eqref{entropy2n} that
        \begin{equation}\label{dfs3w1x4}
        \begin{aligned}
            &\E[R_t(n)\log R_t(n)]\\
            &\quad\quad=\E\big\{R^{(1)}_t\log R^{(1)}_t
            \E_{\P(\cdot|\mathscr{F}^{(1)})}
            R^{(2)}_t(n)
            \big\}
            +\E\big\{
            R^{(1)}_t\E_{\P(\cdot|\mathscr{F}^{(1)})}
            [R^{(2)}_t(n)\log R^{(2)}_t(n)]
            \big\}\\
            &\quad\quad=\E[R^{(1)}_t\log R^{(1)}_t]
            +\E_{R^{(1)}_t\P}
            \big\{
            \E_{\P(\cdot|\mathscr{F}^{(1)})}
            [R^{(2)}_t(n)\log R^{(2)}_t(n)]
            \big\}\\
            &\quad\quad\leq\frac{|x^{(1)}-y^{(1)}|^2}{2}
            \left(
            \int_0^T\lambda_r^{-2}\,
            \d\ell^{\varepsilon_1}_1(r)
            \right)^{-1}+\frac{\e^{2K(0,T)}}{2\left(
            \int_T^{2T}\e^{-2K(T,s)}
            \,\d\ell^{\varepsilon_2}_2(s)
            \right)^2}\\
            &\quad\quad\quad\times\Big(
            |x^{(2)}-y^{(2)}|^2
            \E I_1
            +d(l\vee1)^2
            |x^{(1)}-y^{(1)}|^{2(l\wedge1)}
            \E[I_1I_2]
            \Big),
        \end{aligned}
        \end{equation}
        where
        $$
            I_1:=\int_T^{2T}\e^{-2K(T,s)}
            |X_s^{(1),\ell_1^{\varepsilon_1}}(x)|
            ^{-2l}\,\d\ell^{\varepsilon_2}_2(s)
        $$
        and
        $$
            I_2:=\int_0^T\e^{-2K(0,s)}
            \big(|x^{(1)}-y^{(1)}|
            +|X_s^{(1),\ell_1^{\varepsilon_1}}(x)|
            \big)^{2(l-1)^+}
            \,\d\ell^{\varepsilon_2}_2(s).
        $$
        It follows from the elementary inequality
        \begin{equation}\label{jh5dc4}
            \left(\sum_{i=1}^na_i\right)^r
            \leq n^{(r-1)^+}
            \sum_{i=1}^na_i^r,
            \quad n\in\N,\,a_i\geq0,\,r\geq0
        \end{equation}
        that
        \begin{align*}
            I_2&\leq\int_0^T\e^{-2K(0,s)}
            \left(|x^{(1)}-y^{(1)}|+|x^{(1)}|
            +\left|\int_0^s\sigma_r\,
            \d W^{(1)}_
            {\ell_1^{\varepsilon_1}(s)-
            \ell_1^{\varepsilon_1}(0)}
            \right|
            \right)^{2(l-1)^+}
            \,\d\ell^{\varepsilon_2}_2(s)\\
            &\leq3^{(2l-3)^+}\left(
            |x^{(1)}-y^{(1)}|^{2(l-1)^+}
            +|x^{(1)}|^{2(l-1)^+}
            \right)\int_0^T\e^{-2K(0,s)}
            \,\d\ell^{\varepsilon_2}_2(s)
            +3^{(2l-3)^+}I_3\\
            &\leq3^{(2l-3)^+}2^{1+(2l-3)^+}\left(
            |x^{(1)}|^{2(l-1)^+}+
            |y^{(1)}|^{2(l-1)^+}
            \right)\int_0^T\e^{-2K(0,s)}
            \,\d\ell^{\varepsilon_2}_2(s)
            +3^{(2l-3)^+}I_3,
        \end{align*}
        where
        $$
            I_3:=\int_0^T\e^{-2K(0,s)}
            \left|\int_0^s\sigma_r\,
            \d W^{(1)}_
            {\ell_1^{\varepsilon_1}(s)-
            \ell_1^{\varepsilon_1}(0)}
            \right|
            ^{2(l-1)^+}
            \,\d\ell^{\varepsilon_2}_2(s).
        $$
        According to Lemma \ref{ahf2} in the
        appendix, for any $\theta\in(0,m/2)$,
        there exists $c=c(m,\theta)>0$ such that
        \begin{equation}\label{j11g6fcd}
        \begin{aligned}
            \E|X_s^{(1),\ell_1^{\varepsilon_1}}(x)|^{-2\theta}
            &=\E\left|x^{(1)}+\int_0^s\sigma_r\,
            \d W^{(1)}_
            {\ell_1^{\varepsilon_1}(s)-
            \ell_1^{\varepsilon_1}(0)}
            \right|
            ^{-2\theta}\\
            &\leq c\left(
            \int_0^s\|\sigma_r\|^2\,
            \d\ell^{\varepsilon_1}_1(r)
            \right)^{-\theta}\\
            &\leq c\left(
            \int_0^s\lambda_r^{-2}\,
            \d\ell^{\varepsilon_1}_1(r)
            \right)^{-\theta}
        \end{aligned}
        \end{equation}
        holds for all $s>0$ and $x\in\R^{m+d}$.
        This yields that for some
        $c_1=c_1(m,l)>0$
        \begin{align*}
            \E I_1&\leq c_1
            \int_T^{2T}\e^{-2K(T,s)}
            \left(
            \int_0^s\lambda_r^{-2}\,
            \d\ell^{\varepsilon_1}_1(r)
            \right)^{-l}
            \,\d\ell^{\varepsilon_2}_2(s)\\
            &\leq c_1
            \left(
            \int_0^T\lambda_r^{-2}\,
            \d\ell^{\varepsilon_1}_1(r)
            \right)^{-l}
            \int_T^{2T}\e^{-2K(T,s)}
            \,\d\ell^{\varepsilon_2}_2(s).
        \end{align*}
        Since $l<m/2$, we can pick $p=p(l)>1$
        such that $pl<m/2$. It follows from
        the H\"{o}lder inequality
        and \eqref{j11g6fcd} that
        \begin{align*}
            \E I_1^p&=\E\left(
            \int_T^{2T}\e^{-\frac{2(p-1)}{p}K(T,s)}\cdot
            \e^{-\frac2pK(T,s)}
            |X_s^{(1),\ell_1^{\varepsilon_1}}(x)|
            ^{-2l}\,\d\ell^{\varepsilon_2}_2(s)
            \right)^p\\
            &\leq\left(\int_T^{2T}\e^{-2K(T,s)}
            \,\d\ell^{\varepsilon_2}_2(s)\right)^{p-1}
            \cdot
            \E\left[
            \int_T^{2T}\e^{-2K(T,s)}
            |X_s^{(1),\ell_1^{\varepsilon_1}}(x)|
            ^{-2pl}\,\d\ell^{\varepsilon_2}_2(s)
            \right]\\
            &\leq\left(\int_T^{2T}\e^{-2K(T,s)}
            \,\d\ell^{\varepsilon_2}_2(s)\right)^{p-1}
            \cdot
            c_2
            \int_T^{2T}\e^{-2K(T,s)}
            \left(
            \int_0^s\lambda_r^{-2}\,
            \d\ell^{\varepsilon_1}_1(r)
            \right)^{-pl}
            \,\d\ell^{\varepsilon_2}_2(s)
            \\
            &\leq c_2\left(
            \int_0^T\lambda_r^{-2}\,
            \d\ell^{\varepsilon_1}_1(r)
            \right)^{-pl}
            \left(
            \int_T^{2T}\e^{-2K(T,s)}
            \,\d\ell^{\varepsilon_2}_2(s)
            \right)^p
        \end{align*}
        for some $c_2=c_2(m,p,l)>0$. Moreover, we have
        \begin{align*}
            &\E I_3^{\frac{p}{p-1}}
            =\E\left(
            \int_0^{T}\e^{-\frac2pK(0,s)}\cdot
            \e^{-\frac{2(p-1)}{p}K(0,s)}
            \left|\int_0^s\sigma_r\,
            \d W^{(1)}_
            {\ell_1^{\varepsilon_1}(s)-
            \ell_1^{\varepsilon_1}(0)}
            \right|^{2(l-1)^+}\,\d\ell^{\varepsilon_2}_2(s)
            \right)^{\frac{p}{p-1}}\\
            &\leq\left(\int_0^{T}\e^{-2K(0,s)}
            \,\d\ell^{\varepsilon_2}_2(s)\right)
            ^{\frac{1}{p-1}}
            \cdot
            \E\left[
            \int_0^{T}\e^{-2K(0,s)}
            \left|\int_0^s\sigma_r\,
            \d W^{(1)}_
            {\ell_1^{\varepsilon_1}(s)-
            \ell_1^{\varepsilon_1}(0)}
            \right|
            ^{\frac{2p}{p-1}(l-1)^+}
            \d\ell^{\varepsilon_2}_2(s)
            \right]\\
            &\leq\left(\int_0^{T}\e^{-2K(0,s)}
            \,\d\ell^{\varepsilon_2}_2(s)\right)^{\frac{1}{p-1}}
            \cdot
            c_3
            \int_0^{T}\e^{-2K(0,s)}
            \left(
            \int_0^s\lambda_r^{-2}\,
            \d\ell^{\varepsilon_1}_1(r)
            \right)^{\frac{p}{p-1}(l-1)^+}
            \,\d\ell^{\varepsilon_2}_2(s)
            \\
            &\leq c_3\left(
            \int_0^T\lambda_r^{-2}\,
            \d\ell^{\varepsilon_1}_1(r)
            \right)^{\frac{p}{p-1}(l-1)^+}
            \left(
            \int_0^{T}\e^{-2K(0,s)}
            \,\d\ell^{\varepsilon_2}_2(s)
            \right)^{\frac{p}{p-1}}
        \end{align*}
        for some $c_3=c_3(m,p,l)>0$. Thus,
        \begin{align*}
            \E[I_1I_3]
            &\leq\left(\E I_1^p\right)^{\frac1p}
            \left(\E I_3^{\frac{p}{p-1}}\right)
            ^{\frac{p-1}{p}}
            \\
            &\leq c_2^{\frac{1}{p}}
            c_3^{\frac{p-1}{p}}
            \left(
            \int_0^T\lambda_r^{-2}\,
            \d\ell^{\varepsilon_1}_1(r)
            \right)^{-(l\wedge1)}
            \int_0^{T}\e^{-2K(0,s)}
            \,\d\ell^{\varepsilon_2}_2(s)
            \cdot
            \int_T^{2T}\e^{-2K(T,s)}
            \,\d\ell^{\varepsilon_2}_2(s).
        \end{align*}
        Combining the above estimates,
        we get that for some positive constant
        $C=C(m,d,l)$
        \begin{align*}
            &|x^{(2)}-y^{(2)}|^2
            \E I_1
            +d(l\vee1)^2
            |x^{(1)}-y^{(1)}|^{2(l\wedge1)}
            \E[I_1I_2]\\
            &\leq
            2C|x^{(2)}-y^{(2)}|^2
            \left(
            \int_0^T\lambda_r^{-2}\,
            \d\ell^{\varepsilon_1}_1(r)
            \right)^{-l}
            \int_T^{2T}\e^{-2K(T,s)}
            \,\d\ell^{\varepsilon_2}_2(s)\\
            &\quad+2C
            \left(\left[
            |x^{(1)}|^{2(l-1)^+}+
            |y^{(1)}|^{2(l-1)^+}
            \right]
            \left(
            \int_0^T\lambda_r^{-2}\,
            \d\ell^{\varepsilon_1}_1(r)
            \right)^{-l}
            +
            \left(
            \int_0^T\lambda_r^{-2}\,
            \d\ell^{\varepsilon_1}_1(r)
            \right)^{-(l\wedge1)}
            \right)\\
            &\qquad\times
            |x^{(1)}-y^{(1)}|^{2(l\wedge1)}
            \int_0^{T}\e^{-2K(0,s)}
            \,\d\ell^{\varepsilon_2}_2(s)
            \cdot
            \int_T^{2T}\e^{-2K(T,s)}
            \,\d\ell^{\varepsilon_2}_2(s).
        \end{align*}
        This, together with \eqref{dfs3w1x4},
        gives that for all $n\in\N$
        and $t\geq0$
        \begin{equation}\label{dews231s}
        \begin{aligned}
            &\E\big[R_t(n)\log R_t(n)\big]\leq
            \frac{|x^{(1)}-y^{(1)}|^2}{2}
            \left(
            \int_0^T\lambda_r^{-2}\,
            \d\ell^{\varepsilon_1}_1(r)
            \right)^{-1}\\
            &+\frac{C\e^{2K(0,T)}}
            {\int_T^{2T}\e^{-2K(T,s)}
            \,\d\ell^{\varepsilon_2}_2(s)}
            \Bigg\{|x^{(2)}-y^{(2)}|^2
            \left(
            \int_0^T\lambda_r^{-2}\,
            \d\ell^{\varepsilon_1}_1(r)
            \right)^{-l}
            \\
            &\quad+
            \left(\left[
            |x^{(1)}|^{2(l-1)^+}+
            |y^{(1)}|^{2(l-1)^+}
            \right]
            \left(
            \int_0^T\lambda_r^{-2}\,
            \d\ell^{\varepsilon_1}_1(r)
            \right)^{-l}
            +
            \left(
            \int_0^T\lambda_r^{-2}\,
            \d\ell^{\varepsilon_1}_1(r)
            \right)^{-(l\wedge1)}
            \right)\\
            &\qquad\times
            |x^{(1)}-y^{(1)}|^{2(l\wedge1)}
            \int_0^{T}\e^{-2K(0,s)}
            \,\d\ell^{\varepsilon_2}_2(s)
            \Bigg\}.
        \end{aligned}
        \end{equation}
        It is not hard to verify that
        this implies that $(R_t)_{t\geq0}$ is an
        $\mathscr{F}_t$-martingale under $\P$,
        and thus $\E R=1$, where
        $$
            R:=R_{[\ell_1^{\epsilon_1}(T)
            -\ell_1^{\epsilon_1}(0)]
            \vee[\ell_2^{\varepsilon_2}(2T)
            -\ell_2^{\varepsilon_2}(0)]}.
        $$
        Since for any $t\geq0$, $R_t(n)\rightarrow R_t$
        as $n\rightarrow\infty$, we can
        let $n\rightarrow\infty$ in \eqref{dews231s}
        and use Fatou's lemma to know that
        \eqref{dews231s} holds with
        $R_t(n)$ replaced by $R$.

    \emph{Step 4:}
        By the Jensen inequality, we have
        for any random variable $F\geq1$,
        $$
            \E\left[R\log\frac{F}{R}\right]
            =\E_{R\P}\left[\log\frac{F}{R}\right]
            \leq\log\E_{R\P}\left[\frac{F}{R}\right]
            =\log\E F,
        $$
        hence
        \begin{equation}\label{young}
            \E\left[R\log F\right]
            \leq\log\E F + \E\left[R\log R\right].
        \end{equation}
        Let
        $$
            \widetilde{W}_t^{(2)}=
            W_t^{(2)}+\int_{
            \ell_2^{\varepsilon_2}(T)
            -\ell_2^{\varepsilon_2}(0)}^{t\vee
            [\ell_2^{\varepsilon_2}(T)
            -\ell_2^{\varepsilon_2}(0)]}
            \eta_s^{(2)}\,\d s
            =W_t^{(2)}+\int_0^t
            \eta_s^{(2)}\,\d s,
            \quad t\geq0.
        $$
        Then
        $$
            \widetilde{W}_t
            :=\big(\widetilde{W}_t^{(1)},
            \widetilde{W}_t^{(2)}\big)
            =W_t+\int_0^t\Theta_s\,\d s,\quad t\geq0,
        $$
        where
        $$
            \Theta_s:=\big(
            \eta^{(1)}_s\,,\,
            \eta^{(2)}_s
            \big).
        $$
        Clearly, we can rewrite $R_t$ as
        $$
            R_t=\exp\left[
            -\int_0^{t}
            \langle\Theta_s,\d W_s\rangle
            -\frac12\int_0^{t}
            |\Theta_s|^2\,\d s
            \right],\quad t\geq0.
        $$
        It follows from Girsanov's theorem that
        under the
        weighted probability measure $R\P$,
        $\widetilde{W}_t$ is a standard Brownian
        motion on $\R^{m+d}$.
        Noting that $Y_t=(Y_t^{(1)},Y_t^{(2)})$ solves
        the SDE
        $$
        \left\{
        \begin{array}{l}
            \d Y_t^{(1)}=
            \sigma_t\,\d \widetilde{W}_{\ell_1^{\varepsilon_1}(t)
            -\ell_1^{\varepsilon_1}(0)}^{(1)},\\
            Y_0^{(1)}=y^{(1)},\\
            \d Y_t^{(2)}=
            b(t,Y_t^{(2)})\,\d t
            +|Y_t^{(1)}|^l
            \,\d \widetilde{W}_{\ell_2^{\varepsilon_2}(t)
            -\ell_2^{\varepsilon_2}(0)}^{(2)},\\
            Y_0^{(2)}=y^{(2)},
        \end{array}
        \right.
        $$
        we conclude
        that the distribution of $Y_{2T}$ under $R\P$
        coincides with that of
        $X_{2T}^{\ell_1^{\varepsilon_1},
        \ell_2^{\varepsilon_2}}(y)$
        under $\P$. Therefore, we obtain from
        $Y_{2T}=X_{2T}^{\ell_1^{\varepsilon_1},
        \ell_2^{\varepsilon_2}}(x)$
        and \eqref{young} that for
        any $f\in\mathscr{B}_b(\R^{m+d})$ with $f\geq1$
        \begin{align*}
            P_{2T}^{\ell_1^{\varepsilon_1},
            \ell_2^{\varepsilon_2}}\log f(y)
            &=\E\log f\big(X_{2T}^{\ell_1^{\varepsilon_1},
            \ell_2^{\varepsilon_2}}(y)\big)\\
            &=\E_{R\P}\log f(Y_{2T})\\
            &=\E\big[
            R\log f\big(X_{2T}^{\ell_1^{\varepsilon_1},
            \ell_2^{\varepsilon_2}}(x)\big)
            \big]\\
            &\leq\log\E f\big(X_{2T}^{\ell_1^{\varepsilon_1},
            \ell_2^{\varepsilon_2}}(x)\big)+\E[R\log R]\\
            &=\log P_{2T}^{\ell_1^{\varepsilon_1},
            \ell_2^{\varepsilon_2}}f(x)
            +\E[R\log R].
        \end{align*}
        Inserting the estimate \eqref{dews231s} with
        $R_t(n)$ replaced by $R$ into this inequality,
        we complete the proof
        of the log-Harnack inequality.
    \end{proof}

To prove Proposition \ref{change} by using Lemma \ref{jhf5f}, we
need some preparations. First, the
following Burkholder-Davis-Gundy type inequality
is essentially due to \cite[Lemma 2.3]{Kus09}. For
the reader's convenience, we include a simple proof.

\begin{lemma}\label{BDG}
    Let $\varrho:[0,\infty)\rightarrow\R$ be a non-decreasing
    c\`{a}dl\`{a}g function with $\varrho(0)=0$.
    For any $p>0$, there exists a constant $C_p>0$
    depending only on $p$ such that for any $t>0$
    $$
        \E\left[\sup_{s\in[0,t]}\left|\int_0^s\sigma_r\,
        \d W^{(1)}_{\varrho(r)}\right|^p\right]
        \leq C_p\left(\int_0^t\|\sigma_r\|^2\,
        \d\varrho(r)\right)^{p/2}.
    $$
\end{lemma}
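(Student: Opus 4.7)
The plan is to exploit the fact that, since $\sigma$ and $\varrho$ are both deterministic, the integral
\[
    M_s := \int_0^s \sigma_r\,\d W^{(1)}_{\varrho(r)}
\]
is an $\R^m$-valued c\`adl\`ag Gaussian martingale (with respect to the natural filtration of the time-changed Brownian motion), so the Burkholder-Davis-Gundy inequality is directly applicable.

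Concretely, I would first note that $B_r := W^{(1)}_{\varrho(r)}$ is a c\`adl\`ag martingale with predictable quadratic covariation matrix $\varrho(r) I_{m\times m}$, because $\varrho$ is deterministic non-decreasing with $\varrho(0)=0$. Therefore $M_s$ is a c\`adl\`ag local martingale with angle bracket
\[
    \langle M\rangle_s = \int_0^s \mathrm{tr}(\sigma_r\sigma_r^*)\,\d\varrho(r) \leq m\int_0^s \|\sigma_r\|^2\,\d\varrho(r),
\]
which is deterministic. Invoking the c\`adl\`ag version of BDG (e.g.\ Dellacherie--Meyer or Protter) then yields
\[
    \E\Big[\sup_{s\in[0,t]}|M_s|^p\Big] \leq C_p\,\E\big[\langle M\rangle_t^{p/2}\big] \leq C_p\Big(\int_0^t\|\sigma_r\|^2\,\d\varrho(r)\Big)^{p/2},
\]
where the dimensional factor $m^{p/2}$ has been absorbed into $C_p$.

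An alternative route, closer in spirit to the rest of the paper, is to regularize $\varrho$ by $\varrho_\epsilon(r):=\epsilon^{-1}\int_r^{r+\epsilon}\varrho(u)\,\d u+\epsilon r$ as in Section~\ref{sec2}, apply the classical continuous BDG inequality to the continuous martingale $M^\epsilon_s:=\int_0^s\sigma_r\,\d W^{(1)}_{\varrho_\epsilon(r)-\varrho_\epsilon(0)}$, and then pass to the limit $\epsilon\downarrow 0$ using the It\^o isometry for Wiener integrals together with Fatou's lemma. The main obstacle in this second route is justifying the limit of the supremum inside the expectation, since $M$ may have jumps at the discontinuities of $\varrho$, forcing one to pass through $L^2$ convergence of $M^\epsilon_s\to M_s$ for each $s$ and then upgrade to a subsequence almost-sure convergence. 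The direct c\`adl\`ag BDG approach bypasses this difficulty entirely, because the jumps of $B_r$ are centered Gaussians whose variance is precisely $\Delta\varrho(r)$, so the quadratic variation ledger is already captured by the bracket above.
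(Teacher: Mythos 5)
Your first (preferred) route has a genuine gap in the key step. You apply ``the c\`adl\`ag version of BDG'' with the \emph{predictable} bracket $\langle M\rangle_t=\int_0^t\operatorname{tr}(\sigma_r\sigma_r^*)\,\d\varrho(r)$ for arbitrary $p>0$, but no such theorem exists. The standard c\`adl\`ag BDG inequality (Dellacherie--Meyer, Protter) controls $\E\sup_s|M_s|^p$ by $\E[M]_t^{p/2}$ with the \emph{quadratic variation} $[M]$, and only for $p\geq1$; here $[M]_t=\langle M^c\rangle_t+\sum_{s\leq t}|\sigma_s(W^{(1)}_{\varrho(s)}-W^{(1)}_{\varrho(s-)})|^2$ is \emph{random} (the jumps of $B_r=W^{(1)}_{\varrho(r)}$ are Gaussian, so $|\Delta B_s|^2\neq\Delta\varrho(s)$), and it does not equal the deterministic quantity you need on the right-hand side. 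The angle-bracket version $\E\sup_s|M_s|^p\leq C_p\,\E\langle M\rangle_t^{p/2}$ is a different statement (Lenglart--L\'epingle--Pratelli) valid only for $0<p\leq2$; for $p>2$ it carries an unavoidable extra term $\E\sup_s|\Delta M_s|^p$, and closing that term here requires invoking the Gaussian structure of the jumps (e.g.\ moment equivalence for sums of independent Gaussian squares) to show $\E[M]_t^{p/2}\leq C_p\langle M\rangle_t^{p/2}$. So as written the argument is incomplete precisely in the regime $p>2$ (and the citation is off for $p<1$ as well); the conflation of $[M]$ with $\langle M\rangle$ is harmless only because $M$ happens to be continuous after reparametrization --- which is the observation you are missing.

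The paper's proof makes exactly that observation and thereby avoids every one of these issues: substituting $u=\varrho(r)$ via the generalized inverse $\varrho^{-1}(u)=\inf\{s\geq0:\varrho(s)>u\}$ gives $\int_0^s\sigma_r\,\d W^{(1)}_{\varrho(r)}=\int_0^{\varrho(s)}\sigma_{\varrho^{-1}(u)}\,\d W^{(1)}_u$, the supremum over $s\in[0,t]$ is dominated by the supremum of this \emph{continuous} Brownian martingale over $[0,\varrho(t)]$, and the classical BDG inequality for continuous local martingales (valid for every $p>0$, with deterministic bracket $\int_0^{\varrho(t)}\|\sigma_{\varrho^{-1}(u)}\|^2\,\d u=\int_0^t\|\sigma_r\|^2\,\d\varrho(r)$) finishes the proof in three lines. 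Your second, regularization-based route would also work, and your diagnosis of its difficulty (interchanging the supremum with the $\varepsilon$-limit at jump times) is accurate, but it is strictly more labor than the substitution; I would recommend replacing your direct c\`adl\`ag-BDG step by the time-change identity above.
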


\begin{proof}
    Fix $t>0$. Let $g:[0,t]\rightarrow\R$ be a bounded
    measurable function. By
    \cite[Lemma 4.2]{Zha14}, one has
    $$
        \int_0^tg(r)\,\d\varrho(r)
        =\int_0^{\varrho(t)}g\big(\varrho^{-1}(u)\big)\,
        \d u,
    $$
    where $\varrho^{-1}(u):=\inf\{s\geq0\,:\,\varrho(s)>u\}$.
    A similar argument shows that
    $$
        \int_0^tg(r)\,\d W^{(1)}_{\varrho(r)}
        =\int_0^{\varrho(t)}g\big(\varrho^{-1}(u)\big)\,
        \d W^{(1)}_u.
    $$
    Then by the Burkholder-Davis-Gundy inequality, we get
    \begin{align*}
        \E\left[\sup_{s\in[0,t]}\left|\int_0^s\sigma_r\,
        \d W^{(1)}_{\varrho(r)}\right|^p\right]
        &=\E\left[\sup_{s\in[0,t]}\left|\int_0^{\varrho(s)}
        \sigma_{\varrho^{-1}(u)}\,
        \d W^{(1)}_{u}\right|^p\right]\\
        &\leq\E\left[\sup_{s\in[0,\varrho(t)]}\left|\int_0^{s}
        \sigma_{\varrho^{-1}(u)}\,
        \d W^{(1)}_{u}\right|^p\right]\\
        &\leq C_p\left(
        \int_0^{\varrho(t)}
        \big\|\sigma_{\varrho^{-1}(u)}\big\|^2\,\d u
        \right)^{p/2}\\
        &=C_p\left(
        \int_0^t
        \|\sigma_r\|^2\,\d\varrho(r)
        \right)^{p/2}.
        \qedhere
    \end{align*}
\end{proof}

The following two assumptions will be used:
\begin{enumerate}
        \item[\textbf{(A1)}]
            $\sigma$ is piecewise constant, i.e.\ there
            exists a sequence $\{t_n\}_{n\geq0}$
            with $t_0=0$ and $t_n\uparrow\infty$
            such that
            $$
                \sigma_t=\sum_{n=1}^\infty
                \I_{[t_{n-1},t_n)}(t)
                \sigma_{t_{n-1}};
            $$

        \item[\textbf{(A2)}]
            $b_t:\R^d\rightarrow\R^d$
            is, uniformly for $t$ in compact intervals, global Lipschitz, i.e.\ for any $t>0$,
            there is some $C_t>0$ such that
            $$
                |b_s(x^{(2)})-b_s(y^{(2)})|
                \leq C_t|x^{(2)}-y^{(2)}|,
                \quad 0\leq s\leq t,\,
                x^{(2)},y^{(2)}\in\R^d.
            $$
    \end{enumerate}

\begin{lemma}\label{kjh5fc3}
    Assume \textbf{\upshape(A1)}. Then
    for any $l>0$, $x\in\R^{m+d}$
    and $t>0$,
    \begin{equation}\label{kj65fh}
        \lim_{\varepsilon_1\downarrow0}\E
        \big|X_t^{(1),\ell_1^{\varepsilon_1}}(x)
        -X_t^{(1),\ell_1}(x)\big|^2=0,
    \end{equation}
    \begin{equation}\label{kj23sdc}
        \lim_{\varepsilon_1\downarrow0}
        \limsup_{\varepsilon_2\downarrow0}
        \E\left|
        \int_0^t|X_s^{(1),\ell_1^{\varepsilon_1}}(x)|^l\,
        \d W^{(2)}_{\ell_2^{\varepsilon_2}(s)
        -\ell_2^{\varepsilon_2}(0)}
        -\int_0^t|X_s^{(1),\ell_1}(x)|^l\,
        \d W^{(2)}_{\ell_2(s)}
        \right|^2
        =0.
    \end{equation}
\end{lemma}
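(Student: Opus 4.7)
The plan exploits two structural features. Assumption \textbf{(A1)} reduces $X^{(1),\ell_1^{\varepsilon_1}}$ and $X^{(1),\ell_1}$ to finite linear combinations of Brownian increments, since between consecutive break-points $t_n$ the coefficient $\sigma$ is constant. Moreover, the independence of $W^{(1)}$ and $W^{(2)}$ lets us condition on $\mathscr{F}^{(1)}$ in \eqref{kj23sdc}, so that the $W^{(2)}$-integrand becomes conditionally deterministic and the time-change substitution used in the proof of Lemma \ref{BDG} applies.

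For \eqref{kj65fh}, I would write
\[
    X_t^{(1),\ell_1^{\varepsilon_1}}(x) - X_t^{(1),\ell_1}(x) = \sum_{n\,:\,t_{n-1}<t} \sigma_{t_{n-1}}\bigl[\Delta_n^{\varepsilon_1} - \Delta_n\bigr],
\]
where $\Delta_n^{\varepsilon_1}:=W^{(1)}_{\ell_1^{\varepsilon_1}(t\wedge t_n)-\ell_1^{\varepsilon_1}(0)} - W^{(1)}_{\ell_1^{\varepsilon_1}(t\wedge t_{n-1})-\ell_1^{\varepsilon_1}(0)}$ and $\Delta_n$ is its analogue with $\ell_1$ (no shift, since $\ell_1(0)=0$). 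The sum is finite because $t_n\uparrow\infty$, and \eqref{app324} together with path continuity of $W^{(1)}$ gives $\Delta_n^{\varepsilon_1}\to\Delta_n$ almost surely. A uniform $L^2$-bound on the Brownian increments, arising from the crude estimate $\ell_1^{\varepsilon_1}(s)\leq\ell_1(s+1)+s$ valid for $\varepsilon_1\in(0,1)$, then yields the $L^2$-convergence via dominated convergence.

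For \eqref{kj23sdc}, set $g_s^{\varepsilon_1}:=|X_s^{(1),\ell_1^{\varepsilon_1}}(x)|^l$ and $g_s:=|X_s^{(1),\ell_1}(x)|^l$, and split
\[
    \int_0^t g_s^{\varepsilon_1}\,\d W_{\ell_2^{\varepsilon_2}(s)-\ell_2^{\varepsilon_2}(0)}^{(2)} - \int_0^t g_s\,\d W_{\ell_2(s)}^{(2)} = J_1(\varepsilon_1,\varepsilon_2) + J_2(\varepsilon_1),
\]
with $J_2(\varepsilon_1):=\int_0^t (g_s^{\varepsilon_1}-g_s)\,\d W_{\ell_2(s)}^{(2)}$ and $J_1$ the remaining difference. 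Conditioning on $\mathscr{F}^{(1)}$, the It\^o isometry yields $\E|J_2(\varepsilon_1)|^2 = \E\int_0^t |g_s^{\varepsilon_1}-g_s|^2\,\d\ell_2(s)$, which vanishes as $\varepsilon_1\downarrow 0$ by \eqref{kj65fh}, the H\"older or Lipschitz continuity of $z\mapsto|z|^l$ (according as $l\leq 1$ or $l>1$), and uniform $L^{2+\delta}$ moment bounds from Lemma \ref{BDG}, through a Vitali-type argument.

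For $J_1$ with $\varepsilon_1$ fixed, I would apply the substitution underlying Lemma \ref{BDG} conditionally on $\mathscr{F}^{(1)}$ with $\tau(s):=\ell_2^{\varepsilon_2}(s)-\ell_2^{\varepsilon_2}(0)$ and with $\ell_2$, rewriting both integrals as driven by the same Brownian motion $W^{(2)}$. Conditional It\^o isometry then bounds $\E|J_1|^2$ by
\[
    \E\int_0^\infty \Bigl|\I_{[0,\tau(t)]}(u)\, g_{\tau^{-1}(u)}^{\varepsilon_1} - \I_{[0,\ell_2(t)]}(u)\, g_{\ell_2^{-1}(u)}^{\varepsilon_1}\Bigr|^2\,\d u.
\]
The main obstacle is to prove this quantity tends to $0$ as $\varepsilon_2\downarrow 0$, which I would establish by combining the pointwise convergence $\tau^{-1}(u)\to \ell_2^{-1}(u)$ at continuity points of $\ell_2^{-1}$ (a consequence of \eqref{app324} and the strict monotonicity of $\ell_2^{\varepsilon_2}$), path continuity of $s\mapsto g_s^{\varepsilon_1}$ inherited from that of $X^{(1),\ell_1^{\varepsilon_1}}$, and a dominating function uniform in $\varepsilon_2\in(0,1)$, again produced from $\ell_2^{\varepsilon_2}(s)\leq \ell_2(s+1)+s$ together with the moment bound of Lemma \ref{BDG}.
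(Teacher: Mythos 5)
Your proposal is correct and follows essentially the same route as the paper: the same finite-sum reduction of $X^{(1),\ell_1^{\varepsilon_1}}-X^{(1),\ell_1}$ to Brownian increments at the break-points under \textbf{(A1)} for \eqref{kj65fh}, and the same splitting of \eqref{kj23sdc} into a term where only the time-change of $W^{(2)}$ varies and a term where only the integrand varies, each handled by conditional isometry, the local H\"older/Lipschitz estimate for $z\mapsto|z|^l$, and domination via Lemma \ref{BDG}. The one place you diverge is that the paper disposes of the first term by citing \cite[Lemma 2.3\,(i)]{Zha13}, whereas you prove that convergence directly through the substitution underlying Lemma \ref{BDG} and a.e.\ convergence of the generalized inverses — a valid, self-contained replacement for the citation.
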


\begin{proof}
    Fix $l>0$, $x\in\R^{m+d}$ and $t>0$. It is not
    hard to obtain from \textbf{(A1)} that
    \begin{align*}
        &\big|X_t^{(1),\ell_1^{\varepsilon_1}}(x)
        -X_t^{(1),\ell_1}(x)\big|\\
        &\qquad\leq\sup_{s\in[0,t]}\|\sigma_s\|
        \left(
        \big|W^{(1)}_{\ell_1^{\varepsilon_1}(t)-
        \ell_1^{\varepsilon_1}(0)}
        -W^{(1)}_{\ell_1(t)}\big|
        +2\sum_{n:\,t_n<t}
        \big|W^{(1)}_{\ell_1^{\varepsilon_1}(t_n)-
        \ell_1^{\varepsilon_1}(0)}
        -W^{(1)}_{\ell_1(t_n)}\big|
        \right),
    \end{align*}
    which, together with \eqref{app324},
    implies \eqref{kj65fh}.

    By the isometry property of
    stochastic integrals, we have
    \begin{equation}\label{df3wa}
    \begin{aligned}
        &\E\left|
        \int_0^t|X_s^{(1),\ell_1^{\varepsilon_1}}(x)|^l\,
        \d W^{(2)}_{\ell_2^{\varepsilon_2}(s)
        -\ell_2^{\varepsilon_2}(0)}
        -\int_0^t|X_s^{(1),\ell_1}(x)|^l\,
        \d W^{(2)}_{\ell_2(s)}
        \right|^2\\
        &\quad\leq2\E\left|
        \int_0^t|X_s^{(1),\ell_1^{\varepsilon_1}}(x)|^l\,
        \d W^{(2)}_{\ell_2^{\varepsilon_2}(s)
        -\ell_2^{\varepsilon_2}(0)}
        -\int_0^t|X_s^{(1),\ell_1^{\varepsilon_1}}(x)|^l\,
        \d W^{(2)}_{\ell_2(s)}
        \right|^2\\
        &\quad\quad+2\E\left|
        \int_0^t|X_s^{(1),\ell_1^{\varepsilon_1}}(x)|^l\,
        \d W^{(2)}_{\ell_2(s)}
        -\int_0^t|X_s^{(1),\ell_1}(x)|^l\,
        \d W^{(2)}_{\ell_2(s)}
        \right|^2\\
        &\quad=2\E\left[
        \E_{\P(\cdot|\mathscr{F}^{(1)})}\left|
        \int_0^t|X_s^{(1),\ell_1^{\varepsilon_1}}(x)|^l\,
        \d W^{(2)}_{\ell_2^{\varepsilon_2}(s)
        -\ell_2^{\varepsilon_2}(0)}
        -\int_0^t|X_s^{(1),\ell_1^{\varepsilon_1}}(x)|^l\,
        \d W^{(2)}_{\ell_2(s)}
        \right|^2\right]\\
        &\quad\quad+2
        \int_0^t\E\left(|X_s^{(1),\ell_1^{\varepsilon_1}}(x)|^l
        -|X_s^{(1),\ell_1}(x)|^l\right)^2\,\d
        \ell_2(s)\\
        &\quad=:2I_1(\varepsilon_1,\varepsilon_2)
        +2I_2(\varepsilon_1).
    \end{aligned}
    \end{equation}
    According to \cite[Lemma 2.3\,(i)]{Zha13},
    \begin{equation}\label{hgf5f}
        \lim_{\varepsilon_2\downarrow0}
        \E_{\P(\cdot|\mathscr{F}^{(1)})}\left|
        \int_0^t|X_s^{(1),\ell_1^{\varepsilon_1}}(x)|^l\,
        \d W^{(2)}_{\ell_2^{\varepsilon_2}(s)
        -\ell_2^{\varepsilon_2}(0)}
        -\int_0^t|X_s^{(1),\ell_1^{\varepsilon_1}}(x)|^l\,
        \d W^{(2)}_{\ell_2(s)}
        \right|^2=0.
    \end{equation}
    Using \eqref{jh5dc4}, we find that
    \begin{align*}
        &\sup_{\varepsilon_2\in(0,1]}
        \E_{\P(\cdot|\mathscr{F}^{(1)})}\left|
        \int_0^t|X_s^{(1),\ell_1^{\varepsilon_1}}(x)|^l\,
        \d W^{(2)}_{\ell_2^{\varepsilon_2}(s)
        -\ell_2^{\varepsilon_2}(0)}
        -\int_0^t|X_s^{(1),\ell_1^{\varepsilon_1}}(x)|^l\,
        \d W^{(2)}_{\ell_2(s)}
        \right|^2\\
        &\leq2\sup_{\varepsilon_2\in(0,1]}
        \int_0^T|X_s^{(1),\ell_1^{\varepsilon_1}}(x)|^{2l}
        \,\d
        \ell_2^{\varepsilon_2}(s)
        +2\int_0^t|X_s^{(1),\ell_1^{\varepsilon_1}}(x)|^{2l}
        \,\d
        \ell_2(s)\\
        &\leq2^{1+(2l-1)^+}\sup_{\varepsilon_2\in(0,1]}
        \int_0^t\left(|x^{(1)}|^{2l}+
        \left|
        \int_0^s\sigma_r\,\d W^{(1)}
        _{\ell_1^{\varepsilon_1}(r)-\ell_1^{\varepsilon_1}(0)}
        \right|^{2l}\right)
        \,\d\ell_2^{\varepsilon_2}(s)\\
        &\quad+2^{1+(2l-1)^+}\int_0^t\left(|x^{(1)}|^{2l}+
        \left|
        \int_0^s\sigma_r\,\d W^{(1)}
        _{\ell_1^{\varepsilon_1}(r)-\ell_1^{\varepsilon_1}(0)}
        \right|^{2l}\right)
        \,\d\ell_2(s)\\
        &\leq2^{1+(2l-1)^+}
        \left(
        \sup_{\varepsilon_2\in(0,1]}\big[
        \ell_2^{\varepsilon_2}(t)-\ell_2^{\varepsilon_2}(0)
        \big]+
        \ell_2(t)
        \right)
        \left(|x^{(1)}|^{2l}+
        \sup_{s\in[0,t]}\left|
        \int_0^s\sigma_r\,\d W^{(1)}
        _{\ell_1^{\varepsilon_1}(r)-\ell_1^{\varepsilon_1}(0)}
        \right|^{2l}\right)\\
        &\leq2^{1+(2l-1)^+}
        \left(
        \ell_2^1(t)+
        \ell_2(t)
        \right)
        \left(|x^{(1)}|^{2l}+
        \sup_{s\in[0,t]}\left|
        \int_0^s\sigma_r\,\d W^{(1)}
        _{\ell_1^{\varepsilon_1}(r)-\ell_1^{\varepsilon_1}(0)}
        \right|^{2l}\right).
    \end{align*}
    This, together with Lemma \ref{BDG},
    yields
    $$
        \E\left[
        \sup_{\varepsilon_2\in(0,1]}
        \E_{\P(\cdot|\mathscr{F}^{(1)})}\left|
        \int_0^t|X_s^{(1),\ell_1^{\varepsilon_1}}(x)|^l\,
        \d W^{(2)}_{\ell_2^{\varepsilon_2}(s)
        -\ell_2^{\varepsilon_2}(0)}
        -\int_0^t|X_s^{(1),\ell_1^{\varepsilon_1}}(x)|^l\,
        \d W^{(2)}_{\ell_2(s)}
        \right|^2
        \right]<\infty.
    $$
    Then it follows from the dominated convergence
    theorem and \eqref{hgf5f} that
    \begin{equation}\label{limit1}
        \lim_{\varepsilon_2\downarrow0}
        I_1(\varepsilon_1,\varepsilon_2)=0.
    \end{equation}
    Next, we obtain
    from \eqref{jh5dc4} and Lemma \ref{BDG} that
    for any $q\geq0$
    \begin{align*}
         \sup_{\varepsilon_1\in(0,1],s\in[0,t]}
        \E|X_s^{(1),\ell_1^{\varepsilon_1}}(x)|^{q}
        &\leq2^{(q-1)^+}|x^{(1)}|^q+2^{(q-1)^+}
        \sup_{\varepsilon_1\in(0,1],s\in[0,t]}
        \E\left|\int_0^s\sigma_r\,\d W^{(1)}
        _{\ell_1^{\varepsilon_1}(r)-\ell_1^{\varepsilon_1}(0)}
        \right|^q\\
        &\leq2^{(q-1)^+}|x^{(1)}|^q+2^{(q-1)^+}
        \sup_{\varepsilon_1\in(0,1]}
        \left(
        \int_0^t\|\sigma_r\|^2\,\d\ell_1^{\varepsilon_1}(r)
        \right)^{q/2}\\
        &\leq2^{(q-1)^+}|x^{(1)}|^q+2^{(q-1)^+}
        \sup_{r\in[0,t]}\|\sigma_r\|^q\cdot
        \sup_{\varepsilon_1\in(0,1]}
        \left[
        \ell_1^{\varepsilon_1}(t)-
        \ell_1^{\varepsilon_1}(0)
        \right]^{q/2}\\
        &\leq2^{(q-1)^+}|x^{(1)}|^q+2^{(q-1)^+}
        \sup_{r\in[0,t]}\|\sigma_r\|^q\cdot
        \left[
        \ell_1^1(t)
        \right]^{q/2},
    \end{align*}
    and
    \begin{align*}
         \sup_{s\in[0,t]}
        \E|X_s^{(1),\ell_1}(x)|^{q}
        &\leq2^{(q-1)^+}|x^{(1)}|^q+2^{(q-1)^+}
        \sup_{s\in[0,t]}
        \E\left|\int_0^s\sigma_r\,\d W^{(1)}
        _{\ell_1(r)}
        \right|^q\\
        &\leq2^{(q-1)^+}|x^{(1)}|^q+2^{(q-1)^+}
        \|\sigma\|_{L^2([0,t];\,\d\ell_1)}^q.
    \end{align*}
    This means that for any $q\geq0$
    $$
        C_{x,q,t}:=\max\left\{
        \sup_{\varepsilon_1\in(0,1],s\in[0,t]}
        \E|X_s^{(1),\ell_1^{\varepsilon_1}}(x)|^{q},
        \,
        \sup_{s\in[0,t]}
        \E|X_s^{(1),\ell_1}(x)|^{q}
        \right\}
        <\infty.
    $$
    Using the elementary inequality
    $$
        \left|u^l-v^l\right|\leq\frac{l\vee2}{2}
        |u-v|^{l\wedge1}\left(u^{(l-1)^+}+
        v^{(l-1)^+}\right),\quad u,v\geq0,
    $$
    we obtain
    \begin{align*}
        &\frac{4}{(l\vee2)^2}\E\left(
        |X_s^{(1),\ell_1^{\varepsilon_1}}(x)|^l
        -|X_s^{(1),\ell_1}(x)|^l\right)^2\\
        &\leq\E\left[
        \big|X_s^{(1),\ell_1^{\varepsilon_1}}(x)-
        X_s^{(1),\ell_1}(x)\big|^{l\wedge1}
        \left(|X_s^{(1),\ell_1^{\varepsilon_1}}(x)|^{(l-1)^+}
        +|X_s^{(1),\ell_1}(x)|^{(l-1)^+}\right)
        \right]\\
        &\leq
        \sqrt{\E\big|X_s^{(1),\ell_1^{\varepsilon_1}}(x)-
        X_s^{(1),\ell_1}(x)\big|^{2(l\wedge1)}}
        \left(
        \sqrt{\E|X_s^{(1),\ell_1^{\varepsilon_1}}(x)|^{2(l-1)^+}}
        +\sqrt{\E|X_s^{(1),\ell_1}(x)|^{2(l-1)^+}}
        \right)\\
        &\leq2\sqrt{C_{x,2(l-1)^+,t}}
        \sqrt{\E\big|X_s^{(1),\ell_1^{\varepsilon_1}}(x)-
        X_s^{(1),\ell_1}(x)\big|^{2(l\wedge1)}},
    \end{align*}
    which, together with the dominated
    convergence theorem and \eqref{kj65fh}, yields
    $$
        \lim_{\varepsilon_1\downarrow0}
        I_2(\varepsilon_1)
        =\int_0^t\lim_{\varepsilon_1\downarrow0}
        \E\left(|X_s^{(1),\ell_1^{\varepsilon_1}}(x)|^l
        -|X_s^{(1),\ell_1}(x)|^l\right)^2\,\d
        \ell_2(s)=0.
     $$
     Combining this with \eqref{df3wa} and \eqref{limit1},
     we get \eqref{kj23sdc}.
\end{proof}

\begin{lemma}
    Assume \textbf{\upshape(A1)} and \textbf{\upshape(A2)}.
    Then for any $l>0$, $x\in\R^{m+d}$
    and $t>0$,
    \begin{equation}\label{fg34sd}
        \lim_{\varepsilon_1\downarrow0}
        \limsup_{\varepsilon_2\downarrow0}\E
        \big|X_t^{(2),\ell_1^{\varepsilon_1},
        \ell_2^{\varepsilon_2}}(x)
        -X_t^{(2),\ell_1,\ell_2}(x)\big|^2=0.
    \end{equation}
\end{lemma}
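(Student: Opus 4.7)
The plan is to obtain a Gronwall-type estimate for the second-component difference and then reduce to the convergence of the diffusion term supplied by Lemma~\ref{kjh5fc3}. I set $Z_s := X_s^{(2),\ell_1^{\varepsilon_1},\ell_2^{\varepsilon_2}}(x) - X_s^{(2),\ell_1,\ell_2}(x)$. Subtracting the two defining SDEs (the one for $X^{(2),\ell_1,\ell_2}$ from \eqref{changeeq} and the one for $X^{(2),\ell_1^{\varepsilon_1},\ell_2^{\varepsilon_2}}$ from \eqref{aceq}) gives $Z_s = A_s + N_s$ with
\begin{align*}
A_s &= \int_0^s \bigl[b(r, X_r^{(2),\ell_1^{\varepsilon_1},\ell_2^{\varepsilon_2}}(x)) - b(r, X_r^{(2),\ell_1,\ell_2}(x))\bigr]\,\d r,\\
N_s &= \int_0^s |X_r^{(1),\ell_1^{\varepsilon_1}}(x)|^l\,\d W_{\ell_2^{\varepsilon_2}(r)-\ell_2^{\varepsilon_2}(0)}^{(2)} - \int_0^s |X_r^{(1),\ell_1}(x)|^l\,\d W_{\ell_2(r)}^{(2)}.
\end{align*}

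Next, I use the global Lipschitz property of $b$ supplied by \textbf{(A2)}: by Cauchy--Schwarz, $|A_s|^2 \leq t C_t^2 \int_0^s |Z_r|^2\,\d r$ for $s\in[0,t]$, so that $|Z_s|^2 \leq 2 t C_t^2 \int_0^s |Z_r|^2\,\d r + 2|N_s|^2$. Taking expectations and applying the integral form of Gronwall's inequality yields
$$
\E |Z_t|^2 \leq 2\E|N_t|^2 + C_{t,b} \int_0^t \E|N_s|^2\,\d s,
$$
for a constant $C_{t,b}$ depending only on $t$ and $C_t$. The problem is thereby reduced to showing that $\int_0^t \E|N_s|^2\,\d s \to 0$ in the iterated limit $\lim_{\varepsilon_1\downarrow 0}\limsup_{\varepsilon_2\downarrow 0}$.

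For this I invoke \eqref{kj23sdc} pointwise in $s \in [0,t]$ to obtain $\lim_{\varepsilon_1\downarrow 0}\limsup_{\varepsilon_2\downarrow 0}\E|N_s|^2=0$ for each such $s$, and then argue by dominated convergence. The domination is obtained by splitting $N_s$ into its two stochastic integrals, applying It\^o's isometry and the uniform bounds
$$
\sup_{\varepsilon_1\in(0,1],\,s\in[0,t]}\E|X_s^{(1),\ell_1^{\varepsilon_1}}(x)|^{2l} \vee \sup_{s\in[0,t]}\E|X_s^{(1),\ell_1}(x)|^{2l} < \infty
$$
(analogous to the estimates derived in the proof of Lemma~\ref{kjh5fc3} via Lemma~\ref{BDG}), together with $\ell_2^{\varepsilon_2}(s)-\ell_2^{\varepsilon_2}(0)\leq \ell_2^1(t)$ for $\varepsilon_2\in(0,1]$. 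This delivers a uniform (in $s\in[0,t]$ and $\varepsilon_1,\varepsilon_2\in(0,1]$) bound on $\E|N_s|^2$, justifying the interchange of limit and integral.

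The main obstacle is simply that the previous lemma supplies convergence only at a fixed terminal time, whereas Gronwall produces an integral in $s$; however, since the iterated-limit statement of \eqref{kj23sdc} applies equally well at any time $s\leq t$ and the uniform integrable majorant is easy to produce, the obstacle is routine rather than substantive. A minor technical point is that $N$ is not a martingale in $s$ (the two stochastic integrals are driven by different time-changed Brownian motions), so one cannot simply invoke Doob's inequality; but this is unnecessary here because the Gronwall step above only requires control of $\E|N_s|^2$ rather than of $\E\sup_{s\le t}|N_s|^2$.
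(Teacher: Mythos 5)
Your proof is correct and follows essentially the same route as the paper: the same decomposition of $X_s^{(2),\ell_1^{\varepsilon_1},\ell_2^{\varepsilon_2}}(x)-X_s^{(2),\ell_1,\ell_2}(x)$ into the drift difference and the difference $N_s$ of stochastic integrals, the Lipschitz bound from \textbf{(A2)}, Gronwall, and \eqref{kj23sdc}. The only difference is the order of operations: the paper pushes the iterated $\limsup$ inside the time integral via (reverse) Fatou and then applies Gronwall to the limit function, so it only needs \eqref{kj23sdc} at the terminal time, whereas you apply Gronwall first and then need \eqref{kj23sdc} at every $s\le t$ together with a uniform majorant for $\E|N_s|^2$ --- both variants are valid given the moment bounds already established.
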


\begin{proof}
    Fix $l>0$, $x\in\R^{m+d}$ and $t>0$.
    It follows easily from \textbf{(A2)} that
    \begin{equation}\label{df34a}
    \begin{aligned}
        &\E\big|X_t^{(2),\ell_1^{\varepsilon_1},\ell_2^{\varepsilon_2}}(x)
        -X_t^{(2),\ell_1,\ell_2}(x)\big|^2
        \leq2tC_t^2\int_0^t
        \E\big|X_s^{(2),\ell_1^{\varepsilon_1},\ell_2^{\varepsilon_2}}(x)
        -X_s^{(2),\ell_1,\ell_2}(x)\big|^2\,\d s\\
        &\qquad\qquad\qquad+2\E\left|
        \int_0^t|X_s^{(1),\ell_1^{\varepsilon_1}}(x)|^l\,
        \d W^{(2)}_{\ell_2^{\varepsilon_2}(s)
        -\ell_2^{\varepsilon_2}(0)}
        -\int_0^t|X_s^{(1),\ell_1}(x)|^l\,
        \d W^{(2)}_{\ell_2(s)}
        \right|^2.
    \end{aligned}
    \end{equation}
    Since by \textbf{(A2)} $z\mapsto\sup_{s\in[0,t]}
    |b_s(z)|$ grows at most linearly, it is
    not hard to verify that
    $$
        \sup_{\varepsilon_1,\varepsilon_2\in(0,1],\,s\in[0,t]}
        \E|X_s^{(2),\ell_1^{\varepsilon_1},
        \ell_2^{\varepsilon_2}}(x)|^2<\infty,
        \quad
        \sup_{s\in[0,t]}
        \E|X_s^{(2),\ell_1,\ell_2}(x)|^2<\infty.
    $$
    Letting first $\varepsilon_2\downarrow0$
    and then $\varepsilon_1\downarrow0$ in \eqref{df34a},
    and using Fatou's lemma and \eqref{kj23sdc},
    we get
    \begin{align*}
        &\limsup_{\varepsilon_1\downarrow0}
        \limsup_{\varepsilon_2\downarrow0}
        \E\big|X_t^{(2),\ell_1^{\varepsilon_1},
        \ell_2^{\varepsilon_2}}(x)
        -X_t^{(2),\ell_1,\ell_2}(x)\big|^2\\
        &\qquad\qquad\qquad\qquad\quad\leq2tC_t^2\int_0^t
        \limsup_{\varepsilon_1\downarrow0}
        \limsup_{\varepsilon_2\downarrow0}
        \E\big|X_s^{(2),\ell_1^{\varepsilon_1},
        \ell_2^{\varepsilon_2}}(x)
        -X_s^{(2),\ell_1,\ell_2}(x)\big|^2\,\d s.
    \end{align*}
    This, together with Gronwall's inequality, yields
    the claim.
\end{proof}

\begin{lemma}\label{uniform}
    Assume \textbf{\upshape(A1)} and \textbf{\upshape(A2)}, and
    let $h:\R^{m+d}\rightarrow\R$ be a bounded
    and uniformly continuous function. Then
    for any $t>0$ and $x\in\R^{m+d}$
    $$
        \lim_{\varepsilon_1\downarrow0}
        \limsup_{\varepsilon_2\downarrow0}
        P_t^{\ell_1^{\varepsilon_1},
        \ell_2^{\varepsilon_2}}h(x)
        =P_t^{\ell_1,
        \ell_2}h(x).
    $$
\end{lemma}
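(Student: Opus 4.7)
The plan is to reduce the convergence of $P_t^{\ell_1^{\varepsilon_1},\ell_2^{\varepsilon_2}}h(x)$ to the convergence in probability of $X_t^{\ell_1^{\varepsilon_1},\ell_2^{\varepsilon_2}}(x)$ to $X_t^{\ell_1,\ell_2}(x)$, which is already essentially supplied by Lemma \ref{kjh5fc3} and the previous lemma \eqref{fg34sd}. First I would write
$$
\bigl|P_t^{\ell_1^{\varepsilon_1},\ell_2^{\varepsilon_2}}h(x)-P_t^{\ell_1,\ell_2}h(x)\bigr|
\leq \E\bigl|h\bigl(X_t^{\ell_1^{\varepsilon_1},\ell_2^{\varepsilon_2}}(x)\bigr)
- h\bigl(X_t^{\ell_1,\ell_2}(x)\bigr)\bigr|.
$$

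Since $h$ is bounded and uniformly continuous on $\R^{m+d}$, for any $\delta>0$ I can choose $\eta=\eta(\delta)>0$ such that $|h(u)-h(v)|\leq\delta$ whenever $|u-v|\leq\eta$. Splitting the expectation on $\{|X_t^{\ell_1^{\varepsilon_1},\ell_2^{\varepsilon_2}}(x)-X_t^{\ell_1,\ell_2}(x)|\leq\eta\}$ and its complement yields
$$
\E\bigl|h\bigl(X_t^{\ell_1^{\varepsilon_1},\ell_2^{\varepsilon_2}}(x)\bigr)
- h\bigl(X_t^{\ell_1,\ell_2}(x)\bigr)\bigr|
\leq \delta + 2\|h\|_\infty\,
\P\bigl(\bigl|X_t^{\ell_1^{\varepsilon_1},\ell_2^{\varepsilon_2}}(x)-X_t^{\ell_1,\ell_2}(x)\bigr|>\eta\bigr).
$$

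Next, by Chebyshev's inequality,
$$
\P\bigl(\bigl|X_t^{\ell_1^{\varepsilon_1},\ell_2^{\varepsilon_2}}(x)-X_t^{\ell_1,\ell_2}(x)\bigr|>\eta\bigr)
\leq \frac{1}{\eta^2}\Bigl(
\E\bigl|X_t^{(1),\ell_1^{\varepsilon_1}}(x)-X_t^{(1),\ell_1}(x)\bigr|^2
+\E\bigl|X_t^{(2),\ell_1^{\varepsilon_1},\ell_2^{\varepsilon_2}}(x)-X_t^{(2),\ell_1,\ell_2}(x)\bigr|^2
\Bigr).
$$
The first term tends to $0$ as $\varepsilon_1\downarrow0$ by \eqref{kj65fh} in Lemma \ref{kjh5fc3} and is independent of $\varepsilon_2$, while the second term vanishes under the iterated limit by \eqref{fg34sd}. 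Taking first $\limsup_{\varepsilon_2\downarrow0}$ and then $\lim_{\varepsilon_1\downarrow0}$ on both sides thus gives
$$
\lim_{\varepsilon_1\downarrow0}\limsup_{\varepsilon_2\downarrow0}
\bigl|P_t^{\ell_1^{\varepsilon_1},\ell_2^{\varepsilon_2}}h(x)-P_t^{\ell_1,\ell_2}h(x)\bigr|
\leq \delta.
$$
Since $\delta>0$ is arbitrary, the claim follows.

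I expect no substantial obstacle here: the heavy lifting is already done in the two preceding lemmas, which give the required $L^2$-convergence of each coordinate in the iterated sense. The only point requiring a little care is that the bound for the $X^{(1)}$-error is $\varepsilon_2$-free so that the iterated $\limsup_{\varepsilon_2\downarrow0}\lim_{\varepsilon_1\downarrow0}$ correctly passes through both terms, but this is immediate from the statements as written.
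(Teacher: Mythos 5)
Your proposal is correct and follows essentially the same route as the paper: uniform continuity of $h$ to split the expectation, Chebyshev's inequality, and the iterated $L^2$-convergence from \eqref{kj65fh} and \eqref{fg34sd}. (The only blemish is the reversed order ``$\limsup_{\varepsilon_2\downarrow0}\lim_{\varepsilon_1\downarrow0}$'' in your closing remark, which contradicts the correct order you actually use in the argument.)
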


\begin{proof}
    Fix $t>0$ and $x\in\R^{m+d}$. It follows from
    \eqref{kj65fh} and \eqref{fg34sd} that
    \begin{equation}\label{hg54cf}
        \lim_{\varepsilon_1\downarrow0}
        \limsup_{\varepsilon_2\downarrow0}
        \E\big|X_t^{\ell_1^{\varepsilon_1},
        \ell_2^{\varepsilon_2}}(x)-
        X_t^{\ell_1,
        \ell_2}(x)\big|^2=0.
    \end{equation}
    Since $h$ is uniformly continuous, for any $\epsilon>0$,
    there exists $\delta=\delta(\epsilon)>0$ such that
    $$
        |h(y)-h(z)|<\epsilon\quad\text{provided $|y-z|<\delta$.}
    $$
    Therefore, we obtain from Chebyshev's inequality that
    \begin{align*}
        &\big|P_t^{\ell_1^{\varepsilon_1},
        \ell_2^{\varepsilon_2}}h(x)-P_t^{\ell_1,
        \ell_2}h(x)\big|\\
        &\quad\qquad\qquad\leq\E\left[
        \big|h\big(X_t^{\ell_1^{\varepsilon_1},
        \ell_2^{\varepsilon_2}}(x)\big)
        -h\big(X_t^{\ell_1,
        \ell_2}(x)\big)\big|\I_{\big\{
        \big|X_t^{\ell_1^{\varepsilon_1},
        \ell_2^{\varepsilon_2}}(x)
        -X_t^{\ell_1,
        \ell_2}(x)\big|<\delta
        \big\}}
        \right]\\
        &\quad\qquad\qquad\quad+\E\left[
        \big|h\big(X_t^{\ell_1^{\varepsilon_1},
        \ell_2^{\varepsilon_2}}(x)\big)
        -h\big(X_t^{\ell_1,
        \ell_2}(x)\big)\big|\I_{\big\{
        \big|X_t^{\ell_1^{\varepsilon_1},
        \ell_2^{\varepsilon_2}}(x)
        -X_t^{\ell_1,
        \ell_2}(x)\big|\geq\delta
        \big\}}
        \right]\\
        &\quad\qquad\qquad<\epsilon
        +2\|h\|_\infty
        \P\left(\big|X_t^{\ell_1^{\varepsilon_1},
        \ell_2^{\varepsilon_2}}(x)
        -X_t^{\ell_1,
        \ell_2}(x)\big|\geq\delta\right)\\
        &\quad\qquad\qquad\leq\epsilon
        +2\|h\|_\infty\frac{\E\big|X_t^{\ell_1^{\varepsilon_1},
        \ell_2^{\varepsilon_2}}(x)
        -X_t^{\ell_1,
        \ell_2}(x)\big|^2}
        {\delta^2}.
    \end{align*}
    Letting first $\varepsilon_2\downarrow0$ and then
    $\varepsilon_1\downarrow0$, and
    using \eqref{hg54cf}, we get
    $$
        \limsup_{\varepsilon_1\downarrow0}
        \limsup_{\varepsilon_2\downarrow0}
        \big|P_t^{\ell_1^{\varepsilon_1},
        \ell_2^{\varepsilon_2}}h(x)-P_t^{\ell_1,
        \ell_2}h(x)\big|
        \leq\epsilon,
    $$
    which implies the claim since $\epsilon>0$
    is arbitrary.
\end{proof}

Now we are ready to prove Proposition \ref{change}.

\begin{proof}[Proof of Proposition \ref{change}]
    Fix $T>0$. By a standard
    approximation argument, we may and do
    assume that $f$ is bounded and uniformly continuous
    with $f\geq1$.

\emph{Step 1:}
    Assume \textbf{(A1)} and \textbf{(A2)}. Since
    $\ell_i$ is of bounded variation,
    it is not hard to verify from \eqref{app324} that
    $$
        \lim_{\varepsilon_1\downarrow0}\int_0^T\lambda_r^{-2}\,
        \d\ell^{\varepsilon_1}_1(r)
        =\int_0^T\lambda_r^{-2}\,
        \d\ell_1(r),
        \quad
        \lim_{\varepsilon_2\downarrow0}\int_T^{2T}\e^{-2K(T,s)}
        \,\d\ell^{\varepsilon_2}_2(s)
        =\int_T^{2T}\e^{-2K(T,s)}
        \,\d\ell_2(s),
    $$
    and
    $$
        \lim_{\varepsilon_2\downarrow0}\int_0^{T}\e^{-2K(0,s)}
        \,\d\ell^{\varepsilon_2}_2(s)
        =\int_0^{T}\e^{-2K(0,s)}
        \,\d\ell_2(s).
    $$
    Letting first $\varepsilon_2\downarrow0$ and then
    $\varepsilon_1\downarrow0$ in Lemma \ref{jhf5f},
    and using Lemma \ref{uniform},
    we get the desired log-Harnack inequality.

\emph{Step 2:}
    Assume \textbf{(A2)}. Clearly, we can pick a sequence
    of $\R^m\otimes\R^m$-valued
    functions $\{\sigma^{(n)}\,:\,n\in\N\}$ on $[0,\infty)$
    such that each $\sigma^{(n)}$ is piecewise constant,
    $\|(\sigma^{(n)}_t)^{-1}\|\leq\lambda_t$ for all $n\in\N$ and
    $t\in[0,2T]$, and $\sigma^{(n)}\rightarrow\sigma$ in
    $L^2([0,2T];\,\d\ell_1)$ as $n\rightarrow\infty$.
    Let $X_t^{\ell_1,\ell_2,n}(x)=(X_t^{(1),\ell_1,n}(x),
    X_t^{(2),\ell_1,\ell_2,n}(x))$ solve \eqref{changeeq}
    with $\sigma$ replaced by $\sigma^{(n)}$ and
    $X_0^{\ell_1,\ell_2,n}(x)=x\in\R^{m+d}$, and denote by
    $P_t^{\ell_1,\ell_2,n}$ the associated Markov semigroup.
    By Step 1, the statement of Proposition \ref{change}
    holds with $P_{2T}^{\ell_1,\ell_2}$
    replaced by $P_{2T}^{\ell_1,\ell_2,n}$. We have
    \begin{equation}\label{sdes23s}
    \begin{aligned}
        \lim_{n\rightarrow\infty}\E\big|X_t^{(1),\ell_1,n}(x)-
        X_t^{(1),\ell_1}(x)\big|^2
        &=\lim_{n\rightarrow\infty}
        \E\left|\int_0^{2T}\big(\sigma_t^{(n)}-\sigma_t\big)
        \,\d W^{(1)}_{\ell_1(t)}\right|^2\\
        &=\lim_{n\rightarrow\infty}
        \int_0^{2T}\big\|\sigma_t^{(n)}-\sigma_t\big\|
        ^2\,\d\ell_1(t)
        =0.
    \end{aligned}
    \end{equation}
    It holds from \textbf{(A2)} that
    \begin{align*}
        \big|X_{2T}^{(2),\ell_1,\ell_2,n}(x)
        -X_{2T}^{(2),\ell_1,\ell_2}(x)\big|
        &\leq C_{2T}\int_0^{2T}
        \big|X_t^{(2),\ell_1,\ell_2,n}(x)
        -X_t^{(2),\ell_1,\ell_2}(x)\big|\,
        \d t\\
        &\quad+\int_0^{2T}
        \big(|X_t^{(1),\ell_1,n}(x)|^l
        -|X_t^{(1),\ell_1}(x)|^l\big)\,\d t,
    \end{align*}
    which implies
    \begin{equation}\label{d3esc45r}
    \begin{aligned}
        \E\big|X_{2T}^{(2),\ell_1,\ell_2,n}(x)
        -X_{2T}^{(2),\ell_1,\ell_2}(x)\big|^2
        &\leq4TC_{2T}^2\int_0^{2T}
        \E\big|X_t^{(2),\ell_1,\ell_2,n}(x)
        -X_t^{(2),\ell_1,\ell_2}(x)\big|^2\,\d t\\
        &\quad+4T\int_0^{2T}\E\big(|X_t^{(1),\ell_1,n}(x)|^l
        -|X_t^{(1),\ell_1}(x)|^l\big)^2\,\d t.
    \end{aligned}
    \end{equation}
    Similarly as in the proof of Lemma \ref{kjh5fc3},
    we can deduce
    from \eqref{sdes23s} and Lemma \ref{BDG} that
    $$
        \lim_{n\rightarrow\infty}
        \int_0^{2T}\E\big(|X_t^{(1),\ell_1,n}(x)|^l
        -|X_t^{(1),\ell_1}(x)|^l\big)^2\,\d t
        =0.
    $$
    Letting $n\rightarrow\infty$ in \eqref{d3esc45r}
    and using Fatou's Lemma, we obtain
    \begin{align*}
        &\limsup_{n\rightarrow\infty}
        \E\big|X_{2T}^{(2),\ell_1,\ell_2,n}(x)
        -X_{2T}^{(2),\ell_1,\ell_2}(x)\big|^2\\
        &\qquad\qquad\qquad\qquad
        \leq4TC_{2T}^2\int_0^{2T}
        \limsup_{n\rightarrow\infty}
        \E\big|X_t^{(2),\ell_1,\ell_2,n}(x)
        -X_t^{(2),\ell_1,\ell_2}(x)\big|^2\,\d t,
    \end{align*}
    which, together with Gronwall's inequality, gives
    $$
        \lim_{n\rightarrow\infty}
        \E\big|X_{2T}^{(2),\ell_1,\ell_2,n}(x)
        -X_{2T}^{(2),\ell_1,\ell_2}(x)\big|^2
        =0.
    $$
    Then we conclude that for
    any $x\in\R^{m+d}$, $X_{2T}^{\ell_1,\ell_2,n}(x)
    \rightarrow X_{2T}^{\ell_1,\ell_2}(x)$ in $L^2(\P)$, and
    hence (up to a subsequence)
    $$
        \lim_{n\rightarrow\infty}
        P_{2T}^{\ell_1,\ell_2,n}f
        =P_{2T}^{\ell_1,\ell_2}f
        \quad \text{and}\quad
        \lim_{n\rightarrow\infty}
        P_{2T}^{\ell_1,\ell_2,n}\log f
        =P_{2T}^{\ell_1,\ell_2}\log f.
    $$
    Letting $n\rightarrow\infty$,
    the desired inequality in Proposition \ref{change} holds.

\emph{Step 3:}
    For the general case, we shall make use of
    the approximation argument
    in \cite[part (c) of proof of Theorem 2.1]{WW14}. Let
    $$
        \tilde{b}_t(z):=b_t(z)-k(t)z,\quad t\geq0,\,z\in\R^{d}.
    $$
    By \textbf{(H2)}, it is easy to see
    that the mapping
    $\operatorname{id}-n^{-1}\tilde{b}_t:\R^d\rightarrow\R^d$ is injective
    for any $n\in\N$ and $t\geq0$. Let
    $$
        b_t^{(n)}(z)=n\left[
        \left(\operatorname{id}
        -n^{-1}\tilde{b}_t\right)^{-1}(z)-z
        \right]+k(t)z,\quad n\in\N,\,t\geq0,\,z\in\R^d.
    $$
    Then we find that for any $n\in\N$ and $t\geq0$,
    $b_t^{(n)}$ is, uniformly for $t$ in compact
    intervals, globally Lipschitzian, see \cite{DRW09}. Let $\bar{X}_t^{\ell_1,\ell_2,n}(x)=(\bar{X}_t^{(1),\ell_1,n}(x),
    \bar{X}_t^{(2),\ell_1,\ell_2,n}(x))$ solve \eqref{changeeq}
    with $b$ replaced by $b^{(n)}$ and
    $\bar{X}_0^{\ell_1,\ell_2,n}(x)=x\in\R^{m+d}$, and denote by
    $\bar{P}_t^{\ell_1,\ell_2,n}$ the associated
    Markov semigroup.
    According to the second part of the
    proof, the statement of Proposition \ref{change}
    holds with $P_{2T}^{\ell_1,\ell_2}$
    replaced by $\bar{P}_{2T}^{\ell_1,\ell_2,n}$.
    As in \cite[part (c) of proof of Theorem 2.1]{WW14}, one has
    $\bar{X}_t^{\ell_1,\ell_2,n}(x)\rightarrow
    X_t^{\ell_1,\ell_2}(x)$ a.s. and therefore, it remains
    to let $n\rightarrow\infty$ to finish the proof.
\end{proof}

\section{Proofs of Theorem \ref{main1}
and Examples \ref{ex1} and \ref{ex2}}\label{sec3}

\begin{proof}[Proof of Theorem \ref{main1}]
    Noting that
    $$
        P_{2T}f(\cdot)=\E\left[P_{2T}^{\ell_1,\ell_2}f(\cdot)
        \left|_{\ell_1=S_1\atop \ell_2=S_2}\right.
        \right],\quad f\in\Bscr_b(\R^{m+d}),
    $$
    we get the desired log-Harnack inequality by
    using Proposition \ref{change} and
    the Jensen inequality.
\end{proof}

\begin{proof}[Proof of Example \ref{ex1}]
    By the self-similar property of $\alpha$-stable
    subordinators, one has
    \begin{equation}\label{moment1}
        \E S_1(T)^{-\kappa}=T^{-\kappa/\alpha}
        \E S_1(1)^{-\kappa},\quad T>0,\,\kappa>0.
    \end{equation}
    On the other hand, it is clear that
    \begin{equation}\label{moment2}
        \E S_2(T)=
        T\E S_2(1)<\infty,\quad T>0.
    \end{equation}
    Since $\frac{\e-1}{\e}(1\wedge z)\leq1-\e^{-z}
    \leq1\wedge z$ for all $z\geq0$, we get
    that for all $u>0$
    $$
        \int_{(0,1)}\left(1-\e^{-ux}\right)
        x^{-1-\beta}\,\d x\asymp u\int_0^1x^{-\beta}\,\d x
        =\frac{u}{1-\beta}.
    $$
    Here, $f\asymp g$ means that
    $c^{-1}f(u)\leq g(u)\leq c f(u)$ for some
    constant $c\geq 1$ and all $u$. This, together
    with \cite[Theorem 3.8\,(a)]{DS14} (or
    \cite[Theorem 2.1\,b)]{DSS16}), yields
    that for some constant $C_{\beta,c_2}>0$
    $$
        \E S_2(T)^{-1}\leq C_{\beta,c_2}T^{-1},\quad T>0.
    $$
    Combining the above estimates with Corollary
    \ref{cor}, we finish the proof.
\end{proof}

\begin{proof}[Proof of Example \ref{ex2}]
    Since $S_2$ has finite second moments,
    \eqref{moment2} holds true. Since the
    characteristic exponent of
    $S_2$ is $\phi_2(u)=c_2\beta^{-1}
    \Gamma(1-\beta)[(u+\rho^{1/\beta}
    )^\beta-\rho]$, we obtain
    from \cite[Theorem 2.1\,b)]{DSS16} that
    $$
        \E S_2(T)^{-1}\leq C_{\beta,c_2,\rho}\left(
        T^{-1/\beta}\vee T^{-1}\right)
        =C_{\beta,c_2,\rho} T^{-1}
        \left(
        T^{1-1/\beta}\vee1
        \right)
        ,\quad T>0
    $$
    for some constant $C_{\beta,c_2,\rho}>0$.
    Inserting this bound, \eqref{moment1} and
    \eqref{moment2} into Corollary \ref{cor}, the desired estimate follows.
\end{proof}

\section{Appendix}\label{app}

The following elementary result should
be known, but we could not
find a reference and so we include a simple
proof for the sake of completeness.

\begin{lemma}\label{ahf2}
    Let $\xi$ be an $m$-dimensional Gaussian
    random variable with mean zero and covariance
    matrix $\sigma I_{m\times m}$, where $\sigma>0$.
    Then for any $\theta\in(0,m/2)$, there exists
    $C=C(m,\theta)>0$ depending only
    on $m$ and $\theta$ such that
    $$
        \sup_{\mu\in\R^m}\E|\xi-\mu|^{-2\theta}
        \leq C\sigma^{-\theta}.
    $$
\end{lemma}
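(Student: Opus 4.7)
The plan is to reduce to the standard Gaussian case by scaling, and then bound the resulting integral by splitting near and far from the singularity.

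First I would use the change of variable $\xi = \sqrt{\sigma}\,\eta$, where $\eta$ is standard $m$-dimensional Gaussian. Writing $\tilde{\mu} := \mu/\sqrt{\sigma}$, this yields
$$
    \E|\xi-\mu|^{-2\theta}
    = \sigma^{-\theta}\,\E\big|\eta-\tilde{\mu}\big|^{-2\theta},
$$
so it suffices to prove the uniform bound
$$
    \sup_{\nu\in\R^m}\E|\eta-\nu|^{-2\theta}\leq C(m,\theta).
$$

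Next, I would write out this expectation as an integral and shift the variable $z := y-\nu$ in order to place the singularity at the origin:
$$
    \E|\eta-\nu|^{-2\theta}
    =(2\pi)^{-m/2}\int_{\R^m}|z|^{-2\theta}
    \e^{-|z+\nu|^2/2}\,\d z.
$$
I would then split the domain into $\{|z|\leq1\}$ and $\{|z|>1\}$. On the inner region, I bound the Gaussian factor by $1$ and pass to polar coordinates; since $\theta<m/2$ the integral $\int_{|z|\leq1}|z|^{-2\theta}\,\d z$ converges and equals a finite constant depending only on $m$ and $\theta$. On the outer region I bound $|z|^{-2\theta}\leq1$, so the remaining Gaussian integral contributes at most $(2\pi)^{m/2}$. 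Adding these two contributions gives a uniform bound in $\nu$, which is the desired constant $C(m,\theta)$.

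No serious obstacle is expected: the only condition used is the integrability of $|z|^{-2\theta}$ near the origin, which is precisely the assumption $\theta<m/2$, while the Gaussian tail handles the far region uniformly in $\nu$. The scaling step accounts for the $\sigma^{-\theta}$ factor on the right-hand side.
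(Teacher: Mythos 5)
Your argument is correct. The rescaling $\xi=\sqrt{\sigma}\,\eta$ correctly isolates the factor $\sigma^{-\theta}$, and the shifted integral $(2\pi)^{-m/2}\int_{\R^m}|z|^{-2\theta}\e^{-|z+\nu|^2/2}\,\d z$ is indeed bounded uniformly in $\nu$ by splitting at $|z|=1$: the inner piece is controlled by $\int_{|z|\leq1}|z|^{-2\theta}\,\d z<\infty$ (using only $2\theta<m$) after bounding the exponential by $1$, and the outer piece by the total Gaussian mass after bounding $|z|^{-2\theta}$ by $1$. Every step is sound.

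Your route differs from the paper's in the decomposition used. The paper works directly with the covariance $\sigma I_{m\times m}$, computes $\E|\xi|^{-2\theta}=c(m,\theta)\sigma^{-\theta}$ explicitly, and then splits the shifted integral according to whether $|x-\mu|\geq|\mu|/2$ or $|x-\mu|<|\mu|/2$; on each piece it compares the integrand pointwise to the centered one ($|x|/|x-\mu|\leq 3$ on the far set, $|x|\geq|x-\mu|$ on the near set), arriving at $\E|\xi-\mu|^{-2\theta}\leq(3^{2\theta}+1)\E|\xi|^{-2\theta}$. Your approach instead normalizes first and splits at a fixed radius, bounding the singular part by a Lebesgue integral and the far part by the Gaussian mass. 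Both are elementary and give constants depending only on $m$ and $\theta$; the paper's version yields the slightly more informative comparison constant $3^{2\theta}+1$ against the centered moment, while yours is arguably the more standard ``near/far from the singularity'' argument and avoids the geometric case analysis.
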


\begin{proof}
    First,
    \begin{align*}
        \E|\xi|^{-2\theta}&=\frac{1}{(2\pi\sigma)^{m/2}}
        \int_{\R^m}\frac{1}{|x|^{2\theta}}\,
        \e^{-|x|^2/(2\sigma)}\,\d x\\
        &=\frac{1}{\pi^{m/2}(2\sigma)^\theta}
        \int_{\R^m}\frac{1}{|y|^{2\theta}}\,
        \e^{-|y|^2}\,\d y\\
        &=c(m,\theta)\sigma^{-\theta},
    \end{align*}
    where $c(m,\theta):=\pi^{-m/2}2^{-\theta}\int_{\R^m}
    |y|^{-2\theta}\e^{-|y|^2}\,\d y$ is a positive
    constant since $2\theta<m$. For $\mu\in\R^m\setminus\{0\}$,
    \begin{equation}\label{hfd65d}
        \E|\xi-\mu|^{-2\theta}
        =I_1+I_2,
    \end{equation}
    where
    $$
        I_1:=\frac{1}{(2\pi\sigma)^{m/2}}
        \int_{|x-\mu|\geq|\mu|/2}\frac{1}{|x-\mu|^{2\theta}}\,
        \e^{-|x|^2/(2\sigma)}\,\d x,
    $$
    and
    $$
        I_2:=\frac{1}{(2\pi\sigma)^{m/2}}
        \int_{|x-\mu|<|\mu|/2}\frac{1}{|x-\mu|^{2\theta}}\,
        \e^{-|x|^2/(2\sigma)}\,\d x.
    $$
    If $|x-\mu|\geq|\mu|/2$, one has
    $$
        \frac{|x|}{|x-\mu|}\leq\frac{|x-\mu|+|\mu|}{|x-\mu|}
        \leq1+\frac{|\mu|}{|\mu|/2}=3,
    $$
    and then
    \begin{equation}\label{i1}
        I_1\leq\frac{1}{(2\pi\sigma)^{m/2}}
        \int_{|x-\mu|\geq|\mu|/2}
        \frac{3^{2\theta}}{|x|^{2\theta}}\,
        \e^{-|x|^2/(2\sigma)}\,\d x
        \leq3^{2\theta}\E|\xi|^{-2\theta}.
    \end{equation}
    On the other hand, if $|x-\mu|<|\mu|/2$, it
    follows that
    \begin{align*}
        |x|^2=|(x-\mu)+\mu|^2&=|x-\mu|^2
        +2\langle x-\mu,\mu\rangle+|\mu|^2\\
        &\geq|x-\mu|^2
        -2|x-\mu||\mu|+|\mu|^2\\
        &\geq|x-\mu|^2
        -2\frac{|\mu|}{2}|\mu|+|\mu|^2\\
        &=|x-\mu|^2.
    \end{align*}
    This implies
    \begin{equation}\label{i2}
        I_2\leq\frac{1}{(2\pi\sigma)^{m/2}}
        \int_{|x-\mu|<|\mu|/2}\frac{1}{|x-\mu|^{2\theta}}\,
        \e^{-|x-\mu|^2/(2\sigma)}\,\d x
        \leq\E|\xi|^{-2\theta}.
    \end{equation}
    Therefore, we obtain from \eqref{hfd65d}, \eqref{i1} and \eqref{i2} that, for any $\mu\in\R^m\setminus\{0\}$,
    $$
        \E|\xi-\mu|^{-2\theta}\leq\left(3^{2\theta}+1\right)
        \E|\xi|^{-2\theta}
        =\left(3^{2\theta}+1\right)
        c(m,\theta)\sigma^{-\theta},
    $$
    which completes the proof.
\end{proof}

\begin{ack}
    The authors would like to thank an
    anonymous referee for useful suggestions.
\end{ack}

\end{document}